\numberwithin{equation}{section}
\theoremstyle{plain}
\newtheorem{prop}{Proposition}
\newtheorem{thm}[prop]{Theorem}
\newtheorem{lemm}[prop]{Lemma}
\theoremstyle{definition}
\newtheorem{conj}[prop]{Conjecture}
\newtheorem{rema}[prop]{Remark}
\newtheorem{exam}[prop]{Example}
\def\G{{\mathcal G}}   
\def\D{{\mathcal D}}   
\def\I{{\mathcal I}}   
\def\Val{{\mathcal V}}
\def\KK{\boldsymbol{K}}
\def\mo{{\mathfrak o}}
\def\mm{{\mathfrak m}}
\def\ml{{\mathfrak l}}
\def\no{\noindent}
\def\rk{{\rm rk}}
\newcommand{\trdeg}{{\rm tr}\, {\rm deg}}
\newcommand{\Hom}{{\rm Hom}}
\newcommand{\Ker}{{\rm Ker}}
\def\lra{\longrightarrow}
\def\ra{\rightarrow}
\def\A{{\mathbb A}}
\def\C{{\mathbb C}}
\def\P{{\mathbb P}}
\def\Q{{\mathbb Q}}
\def\Z{{\mathbb Z}}
\def\C{{\mathbb C}}
\def\N{{\mathbb N}}
\def\F{{\mathbb F}}
\def\rD{{\rm D}}
\def\rH{{\rm H}}
\def\rK{{\rm K}}
\def\rI{{\rm I}}
\def\rW{{\rm W}}
\def\Syl{{\mathfrak G}}
\begin{document}

\author{Fedor Bogomolov}
\address{Courant Institute of Mathematical Sciences, N.Y.U. \\
 251 Mercer str. \\
 New York, NY 10012, U.S.A.}
\email{bogomolo@cims.nyu.edu}

\author{Yuri Tschinkel}
\address{Courant Institute of Mathematical Sciences, N.Y.U. \\
 251 Mercer str. \\
 New York, NY 10012, U.S.A.}
\email{tschinkel@cims.nyu.edu}

\title{Galois theory and projective geometry}

\begin{abstract}
We explore connections between birational anabelian geometry and 
abstract projective geometry.
One of the applications is a proof of a 
version of the birational section conjecture. 
\end{abstract}

\date{\today}

\maketitle

\section{Introduction}
\label{sect:intro}

A major open problem today is to identify classes of fields {\em characterized} 
by their absolute Galois groups. There exist genuinely different fields with 
isomorphic Galois groups, e.g., $\F_p$ and $\C((t))$.
However, Neukirch and Uchida showed  
that Galois groups of maximal {\em solvable} extensions of number fields or function
fields of curves over finite fields determine the corresponding field, up-to isomorphism 
\cite{neukirch}, \cite{uchida}. 

This result is the first instance of {\em birational anabelian geometry}
which aims to show that Galois groups of certain fields, e.g., function fields of algebraic varieties, 
determine the field, in a functorial way.  
The term {\em anabelian} was proposed by Grothendieck in \cite{groth-letter} 
where he introduced a class of {\em anabelian} varieties, functorially 
characterized by their \'etale fundamental groups; with 
prime examples being hyperbolic curves and varieties successively 
fibered into hyperbolic curves.  
For representative results, see \cite{N}, \cite{V2}, \cite{V91}, \cite{T}, as well as
\cite{ihara-naka}, \cite{Na-Mo}, \cite{pop-alter}, \cite{P-2}, \cite{mochi-topics}. 

However, absolute Galois groups are simply too large. 
It turns out that there are intermediate groups, whose description involves
some {\em projective geometry}, most importantly, 
geometry of lines and points in the projective plane.
These groups are just minimally different from abelian groups;
they encode the geometry of simple configurations. On the other hand, their structure is already
sufficiently rich so that the corresponding objects in the theory 
of fields allow to capture {\em all} invariants and individual properties of large fields, i.e.,
function fields of transcendence degree at least two over algebraically closed ground 
fields. This insight of the first author \cite{B-1}, \cite{B-2}, \cite{B-3}, was
developed in \cite{BT}, \cite{bt0}, and \cite{bt-milnor}. 
One of our main results is that function fields $K=k(X)$ over $k=\bar{\F}_p$ 
are determined by 
$$
\G^c_K:=\G_K/[\G_K,[\G_K,\G_K]],
$$
where $\G_K$ is the maximal pro-$\ell$-quotient of the absolute Galois group $G_K$ of $K$, and $\G^c$ is 
the canonical central extension of its abelianization $\G^a_K$ 
(see also \cite{pop-inv}).

In \cite{bt2} we survey the development of the main ideas merging into this 
{\em almost abelian anabelian geometry} program. 
Here we prove a new result, a version of the 
{\em birational section conjecture} (see Section~\ref{sect:version}). 
In Sections \ref{sect:galois} and \ref{sect:free}
we discuss cohomological properties of Galois groups closely related to the Bloch--Kato conjecture, 
proved by Voevodsky, Rost, and Weibel, and focus on connections to anabelian geometry.

\

\no
{\bf Acknowledgments.} 
The first author was partially supported  by NSF grants DMS-0701578, DMS-1001662, and by the
AG Laboratory GU-HSE grant RF government  ag. 11 11.G34.31.0023.
The second author was partially supported by NSF grants DMS-0739380 and 
0901777.

\section{Projective geometry and $\rK$-theory}
\label{sect:proj}

The introduction of the projective plane 
essentially trivialized plane geometry and provided
simple proofs for many results 
concerning configurations of lines and points, 
considered difficult before that. 
More importantly, the axiomatization efforts in late 19th and early 20th century
revealed that {\em abstract} projective structures capture {\em coordinates}, 
a ``triumph of modern mathematical thought'' \cite[p. v]{whitehead}.  
The axioms can be found in many books, including Emil Artin's lecture notes
from a course he gave at the Courant Institute in the Fall of 1954 \cite[Chapters VI and VII]{artin}. 
The classical result mentioned above is that an abstract projective space
(subject to Pappus' axiom) is a projectivization of a vector space over a {\em field}. 
This can be strengthened as follows:

\begin{thm}
\label{thm:projective}
Let $K/k$ be an extension of fields. Then $K^\times/k^\times$ is simultaneously 
an abelian group and a projective space. Conversely, an abelian group with a compatible projective
structure corresponds to a field extension. 
\end{thm}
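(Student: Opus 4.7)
The plan is to prove the two directions separately, with the forward implication being essentially definitional and the converse resting on the classical fundamental theorem of projective geometry combined with a careful transfer of the multiplication on $A$ to a vector space $V$ with $A = \P(V)$.

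For the forward direction, I would observe that $K^\times$ is an abelian multiplicative group, so the quotient $K^\times/k^\times$ is an abelian group. Regarding $K$ as a $k$-vector space, $K^\times/k^\times = (K \setminus \{0\})/k^\times$ is by definition $\P(K)$, equipped with its projective structure: the projective lines are the images of the $2$-dimensional $k$-subspaces $V \subset K$. For any $c \in K^\times$ the map $x \mapsto cx$ is a $k$-linear automorphism of $K$, hence sends $2$-dimensional subspaces to $2$-dimensional subspaces, so left translation by $\bar c$ on $K^\times/k^\times$ is a collineation of $\P(K)$. This is the compatibility between the abelian group and projective structures.

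For the converse, let $A$ be an abelian group with a compatible projective structure, which I assume satisfies Pappus' axiom so that the fundamental theorem of projective geometry applies in its classical form. The theorem yields a pair $(V, k)$, unique up to isomorphism, consisting of a field $k$ and a $k$-vector space $V$ with $A = \P(V)$. Compatibility means that for each $a \in V \setminus \{0\}$ left multiplication $\mu_{\bar a} : A \to A$ is a collineation of $\P(V)$, which I would strengthen to a projectivity in $\PGL(V)$. Fixing $e \in V \setminus \{0\}$ representing $1_A$, each such projectivity lifts uniquely to a $k$-linear automorphism $\mu_a$ of $V$ normalized by $\mu_a(e) = a$. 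Define a multiplication on $V$ by $a \cdot b := \mu_a(b)$.

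It then remains to verify the field axioms. Additivity of $\mu_a$ gives left distributivity $a(b + c) = ab + ac$; the relation $\mu_{ab} = \mu_a \circ \mu_b$ (both sides agree as collineations of $A$ by the group law, and agree on $e$ by the normalization) yields associativity; the commutativity of the abelian group $A$ transfers to commutativity of $\cdot$, from which right distributivity also follows; and since every element of $A$ admits a multiplicative inverse, so does every nonzero element of $V$. Extending $V$ by the zero vector as absorbing element gives a field $K$ with $K^\times/k^\times = A$. The main obstacle is precisely the point where I assume each $\mu_{\bar a}$ is a projectivity rather than a general collineation, i.e., that the induced field automorphism $\sigma_a$ of $k$ is trivial; this is the delicate content that must be packaged into the word \emph{compatible} in order for the correspondence with field extensions to be an equivalence rather than merely a one-way construction.
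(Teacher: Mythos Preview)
The paper gives no argument here; it simply cites \cite[Section~1]{bt2}. Your outline is the standard one and is essentially what that reference contains: for the forward direction, $K^\times/k^\times=\P_k(K)$ by definition and multiplication by any $c\in K^\times$ is $k$-linear on $K$, hence a projectivity; for the converse, invoke the fundamental theorem of projective geometry to realize $A=\P(V)$ for some $k$-vector space $V$, lift the translations $\mu_{\bar a}$ to linear maps $\mu_a:V\to V$ normalized at a chosen basepoint $e$, and verify that $a\cdot b:=\mu_a(b)$ makes $V$ into a field $K$ with $K^\times/k^\times=A$.

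You have correctly isolated the one genuine subtlety: the fundamental theorem in general only produces \emph{semilinear} lifts, so a priori each $\mu_{\bar a}$ carries a twist $\sigma_a\in\Aut(k)$, and nothing in the bare hypotheses ``abelian group plus translations are collineations'' forces $\sigma_a=\mathrm{id}$. This is not a flaw in your reasoning but an ambiguity in the theorem as stated---the word ``compatible'' must be read as requiring translations to lie in $\PGL(V)$ rather than merely in the full collineation group, and in \cite{bt2} this is indeed built into the axiomatization of an abstract projective structure. With that reading your argument is complete.
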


\begin{proof}
See \cite[Section 1]{bt2}.
\end{proof}

\


In {\em Algebraic geometry}, projective spaces are the most basic objects. Over nonclosed fields $K$, 
they admit nontrivial forms, called {\em Brauer-Severi} varieties. These forms are classified by the 
{\em Brauer group} ${\rm Br}(K)$, which admits a Galois-cohomological incarnation:
$$
{\rm Br}(K)=\rH^2(G_K, \mathbb G_m).
$$ 
The theory of Brauer groups and the local-global principle for 
{\em Brauer--Severi} varieties over number fields are cornerstones of arithmetic geometry.
Much less is known over more complicated ground fields, e.g., function fields of surfaces.  
Brauer groups, in turn, are closely related to Milnor's $\rK_2$-groups, and more generally $\rK$-theory, 
which emerged in algebra in the study of matrix groups.

We recall the definition of Milnor $\rK$-groups.
Let $K$ be a field. Then 
$$
\rK^{M}_1(K)=K^\times
$$
and the higher $\rK$-groups are spanned by symbols: 
$$
\rK^M_n(K) =  (K^\times)^{\otimes^n} /  \langle \cdots x\otimes (1-x)\cdots \rangle,    
$$
the relations being symbols containing  $x\otimes (1-x)$. 
For $i=1, 2$, Milnor $\rK$-groups of fields coincide with those 
defined by Quillen, and we will often omit the superscript.  

Throughout, we work with function fields of algebraic varieties 
over algebraically closed ground fields; 
by convention, the dimension of the field is its 
transcendence degree over the ground field.

\begin{thm} 
\label{thm:milnor}
\cite{bt-milnor}
Assume that $K$ and $L$ are function fields of algebraic varieties of dimension $\ge 2$, 
over algebraically closed fields $k$ and $l$, and that there exist 
compatible isomorphisms of abelian groups
$$
\rK_1(K)\stackrel{\psi_1}{\lra} \rK_1(L)   \quad \text{ and } \quad \rK_2(K)\stackrel{\psi_2}{\lra} \rK_2(L).
$$
Then there exists an isomorphism of fields 
$$
\psi: K\ra L
$$
such that the induced map on $K^\times$ coincides with $\psi_1^{\pm 1}$. 
\end{thm}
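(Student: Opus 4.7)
The plan is to deduce the theorem from Theorem \ref{thm:projective}. I would show that $\psi_1$ descends to an isomorphism $\bar\psi_1\colon K^\times/k^\times \to L^\times/l^\times$ of abelian groups which, by virtue of its compatibility with $\psi_2$, also respects the projective structure; Theorem \ref{thm:projective} then produces a field isomorphism $\psi\colon K\to L$ inducing $\psi_1$ on $K^\times$, up to the involution $x\mapsto x^{-1}$. This sign ambiguity is forced on us: the automorphism $x\mapsto x^{-1}$ of $\rK_1(K)$ acts trivially on $\rK_2(K)$ (since $\{a^{-1},b^{-1}\}=\{a,b\}$) and so is invisible to the pair $(\psi_1,\psi_2)$.

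The first step is to recover the constants, i.e.\ to prove $\psi_1(k^\times)=l^\times$. Since $k$ is algebraically closed, $k^\times$ is divisible. Conversely, fixing a smooth projective model $X$ with $k(X)=K$, any divisible element $f\in K^\times$ must have trivial principal divisor (each coefficient of $\mathrm{div}(f)$ would be divisible by every positive integer, hence zero), so $f\in H^0(X,\mathcal O_X^\times)=k^\times$. Thus $k^\times$ is the maximal divisible subgroup of $\rK_1(K)$, a purely group-theoretic property which $\psi_1$ must preserve.

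The main work is in the second step: showing that $\bar\psi_1$ sends projective lines to projective lines. A projective line through the identity of $K^\times/k^\times$ is the image of $k(f)^\times$ for some nonconstant $f$, i.e., of a one-dimensional rational subfield of $K$. Following the strategy of \cite{bt-milnor}, I would give a $\rK$-theoretic criterion for a pair $(f,g)\in (K^\times/k^\times)^2$ to lie in a common one-dimensional subfield $k(h)$: roughly, $f$ and $g$ are algebraically dependent over $k$ iff the symbols $\{f,h\}$ and $\{g,h\}$ in $\rK_2(K)$ exhibit a matching vanishing pattern against all auxiliary test elements $h\in K^\times$, reflecting a common decomposition through $\rK_2$ of a one-dimensional subfield. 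The compatibility $\psi_2(\{a,b\})=\{\psi_1(a),\psi_1(b)\}$ then transports this condition verbatim to $L$, so $\bar\psi_1$ carries one-dimensional subfields to one-dimensional subfields, and hence lines to lines. Theorem \ref{thm:projective} now applies to finish the proof.

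The hardest step will be producing a usable, intrinsic $\rK_2$-characterization of one-dimensional rational subfields. This is exactly where the hypothesis $\dim K \geq 2$ enters: one needs a sufficient supply of transcendentally independent test elements in $K$ to detect algebraic dependence of a given pair. The natural technical input is the theory of valuations---divisorial and, more generally, flag valuations of $K$ can be reconstructed from the $\rK_2$-structure via the behavior of symbols on inertia and decomposition data, and rational subfields then appear as intersections of valuation-theoretic subgroups. Once this dictionary between $\rK_2$ and projective geometry is in place, the deduction via Theorem \ref{thm:projective} is formal.
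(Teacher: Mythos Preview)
Your outline has the right overall shape---reduce to Theorem~\ref{thm:projective}, recover $k^\times$ as the divisible part of $\rK_1(K)$, then show $\bar\psi_1$ preserves projective lines---but there is a conceptual slip in Step~2 that hides the real difficulty. You write that ``a projective line through the identity of $K^\times/k^\times$ is the image of $k(f)^\times$ for some nonconstant $f$.'' This is false: $\P_k(K)$ is the projectivization of $K$ as a $k$-vector space, so a projective line $\ml(1,f)$ is the image of the \emph{two-dimensional} $k$-subspace $k+kf$, whereas $k(f)^\times/k^\times=\P_k(E)$ for $E=k(f)$ is an \emph{infinite-dimensional} projective subspace. Showing that $\bar\psi_1$ carries each $\P_k(E)$ into some $\P_l(F)$ does not by itself tell you that genuine $\P^1$'s go to $\P^1$'s, and it is the latter that Theorem~\ref{thm:projective} requires.

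The paper's argument separates these two issues. First, algebraic dependence of $f_1,f_2$ is detected by the single clean criterion that the symbol $(f_1,f_2)$ is \emph{infinitely divisible} in $\rK_2(K)$---no auxiliary test elements or ``matching vanishing patterns'' are needed. This recovers, functorially in $(\psi_1,\psi_2)$, the fan of infinite-dimensional subspaces $\P_k(E)\subset\P_k(K)$ for one-dimensional $E\subset K$. The genuinely hard step, which your sketch never reaches, is then to recognize the actual $\P^1$'s inside this fan: one proves that every projective line in $\P_k(K)$ arises as an \emph{intersection} $\P_k(E_1)\cap\P_k(E_2)$ for a well-chosen pair of one-dimensional subfields $E_1,E_2\subset K$ (this is \cite[Theorem~22]{bt-milnor}). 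Only after this characterization is in place does Theorem~\ref{thm:projective} apply. Your appeal to valuation theory in the last paragraph does not substitute for this step; the paper's route here is purely $\rK$-theoretic (divisibility in $\rK_2$) and projective-geometric (intersections of subspaces), and the valuation-theoretic machinery you allude to belongs to the Galois-group side of the story in Section~\ref{sect:galois-proj}, not to the proof of Theorem~\ref{thm:milnor}.
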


The proof exploits the fact that $\rK_2(K)$ encodes 
the canonical projective structure on $\P_k(K)=K^\times/k^\times$.
It is based on the following observations:
\begin{itemize}
\item The multiplicative groups $k^\times$ and $l^\times$ are characterized as {\em infinitely-divisible} 
elements in $\rK_1(K)$, resp. $\rK_1(L)$. 
This leads to an isomorphism of abelian groups (denoted by the same symbol):
$$
\P_k(K) \stackrel{\psi_1}{\lra} \P_l(L).
$$
\item rational functions $f_1,f_2\in K^\times$ are algebraically dependent in $K$ if and only if their symbol 
$(f_1,f_2)$ is {\em infinitely-divisible}
in $\rK_2(K)$. This allows to characterize $\P_k(E)\subset \P_k(K)$, for one-dimensional $E\subset K$ and we obtain 
a {\em fan} of infinite-dimensional projective subspaces in $\P_k(K)$. 
The compatibility of $\psi_1$ with $\psi_2$ implies that the corresponding structures on $\P_k(K)$ and $\P_l(L)$ coincide. 
\item By Theorem~\ref{thm:projective}, it remains 
to show that $\psi_1$ (or $1/\psi_1$) maps 
projective lines $\P^1\subset \P_k(K)$
to projective lines in $\P_l(L)$. It turns out that projective lines can be intrinsically characterized as
{\em intersections} of well-chosen infinite-dimensional $\P_k(E_1)$ and $\P_k(E_2)$, 
for 1-dimensional subfields $E_1,E_2\subset K$ 
(see \cite[Theorem 22]{bt-milnor} or \cite[Proposition 9]{bt2}).   
\end{itemize}

The theorem proved in \cite{bt-milnor} is stronger, it addresses the case when 
$\psi_1$ is an {\em injective} 
homomorphism.

\section{Projective geometry and Galois groups}
\label{sect:galois-proj}

Let $K$ be a function field over $k=\bar{\F}_p$.
In Section~\ref{sect:proj} we considered the abelian group / projective space $\P_k(K)$ and
its relationship to the $\rK$-theory of the field $K$. Here we focus on a {\em dual} picture. 

Let $R$ be a topological commutative 
ring such that the order of all torsion elements $r\in R$ 
is coprime to $p$. 
Define
\begin{equation}
\label{eqn:ww}
\rW^a_K(R):=\Hom(K^\times/k^\times, R)=\Hom(K^\times, R),
\end{equation}
the $R$-module of continuous homomorphisms, where $\P_k(K)=K^\times/k^\times$ 
is endowed with discrete topology.
We call $\rW^a_K(R)$ the {\em abelian Weil group} of $K$ with values in $R$. 

The abelian Weil group 
carries a collection of distinguished subgroups, corresponding to various {\em valuations}.
Recall that a valuation is a surjective homomorphism 
$$
\nu : K^\times\ra \Gamma_{\nu}
$$
onto an ordered abelian group, subject to a nonarchimedean triangle inequality
\begin{equation}
\label{eqn:tri}
\nu(f+g)\ge \min(\nu(f), \nu(g)). 
\end{equation}
Let $\Val_K$ be the set of all nontrivial valuations of $K$, for $\nu\in \Val_K$ let 
$\mo_{\nu}$ be the valuation ring, $\mm_{\nu}\subset \mo_{\nu}$ its maximal ideal, and $\KK_{\nu}$ the
corresponding function field. We have the following fundamental diagram:

\

\centerline{
\xymatrix{  & 1 \ar[r] & \mo_{\nu}^\times \ar[r]\ar@{=}[d] & K^\times \ar[r]^{\nu} & \Gamma_{\nu}\ar[r] & 1 \\
          1 \ar[r]& (1+\mm_{\nu})^\times \ar[r] &  \mo_{\nu}^\times \ar[r]^{\rho_{\nu}} & \KK_{\nu}^\times\ar[r]  & 1  
}
}

\

Every valuation of $k=\bar{\F}_p$ is trivial, and  
valuation theory over function fields $K=k(X)$ is particularly 
{\em geometric}: all $\nu\in \Val_K$ are trivial on $k^\times$ and define $\Gamma_{\nu}$-valued functions on $\P_k(K)$.
Throughout, we restrict our attention to such ground fields.  
In this context, nontrivial valuations on $k(t)$ are in bijection with points $\mathfrak p\in \P^1$, 
they measure the order of a function at $\mathfrak p$. There are many more valuations on 
higher-dimensional varieties.

We call 
$$
\rI^a_\nu(R):=\{ \gamma\in \rW^a_K(R) \,|\, \gamma \text{ is trivial on } \mo_{\nu}^\times\}\subseteq 
\rW^a_K(R)
$$
the {\em abelian inertia group} and 
$$
\rD^a_\nu(R):=\{ \gamma\in \rW^a_K(R) \,|\, \gamma \text{ is trivial on } (1+\mm_{\nu})^\times\}\subseteq 
\rW^a_K(R)
$$
the {\em abelian decomposition group}, we have
$$
\rD^a_\nu(R)/\rI^a_\nu(R)=\rW^a_{\KK_{\nu}}(R).
$$

\begin{exam}
\label{exam:key}
Let $G_K$ be the absolute Galois group of a field $K$, $G^a_K$ its  abelianization, 
and $\G^a_K$ the $\ell$-completion of $G^a_K$. 
By {\em Kummer theory},
$$
\G^a_K=\rW^a_K(\Z_{\ell}).
$$
Moreover, $\rI^a_\nu(\Z_{\ell})$ and $\rD^a_\nu(\Z_{\ell})$  {\em are} the standard abelian  
inertia and decomposition subgroups corresponding to $\nu$.  
\end{exam}

A valuation $\nu$ defines a simple geometry on 
the projective space $\P_k(K)$; equation \eqref{eqn:tri} implies that 
each finite dimensional subspace $\P^n\subset \P_k(K)$ admits a {\em flag} 
\begin{equation}
\label{eqn:flag}
\P_1\subset \P_2\subset ...
\end{equation}
of projective subspaces, such that 
$$
\nu : \P_k(K)\ra \Gamma_{\nu}
$$ 
is constant on $\P_{j}\setminus \P_{j-1}$, for all $j$, 
and this flag structure is preserved
under multiplicative shifts by any $f\in K^\times/k^\times$.

Let $\P$ be a projective space over $k$, e.g., $\P=\P_k(K)$. 
We say that a map  $\iota : \P\ra R$ to an {\em arbitrary} ring $R$ is a {\em flag map}
if every finite-dimensional $\P^n\subset \P$ admits a flag as in \eqref{eqn:flag} 
such that $\iota$ is constant on each $\P_{j}\setminus \P_{j-1}$; a subset $S\subset \P$ will be called
a {\em flag subset} if its set-theoretic characteristic function is a flag map.

\begin{exam}
\label{exam:flag}
A nonempty flag subset of $\P^1(k)$ is either a point, the complement to a point, or all of $\P^1(k)$. 
Nonempty proper flag subsets of  $\P^2(k)$ are one of the following:
\begin{itemize}
\item a point $\mathfrak p$, a line $\ml$, $\ml^\circ:=\ml\setminus \mathfrak p$,
\item $\P^2\setminus \mathfrak p$, $\P^2\setminus \ml$, $\P^2\setminus \ml^\circ$.
\end{itemize}
\end{exam}

\begin{prop} \cite[Section 2]{BT}
\label{prop:flag-map}
Let $\P$ be a projective space over $k=\bar{\mathbb F}_p$. 
A map $\iota:\P\ra R$ is a flag map if and only if
its restriction to every $\P^1\subset \P$ is a flag map, i.e., is constant on the complement to one point. 
\end{prop}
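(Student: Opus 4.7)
The only-if direction is immediate from the definitions: if $\iota$ admits a flag $\P_1 \subset \P_2 \subset \cdots$ on each finite-dimensional $\P^n \subset \P$, then intersecting this flag with any $\P^1 \subset \P^n$ produces a flag on $\P^1$, which by dimension reasons reduces to a single point, exactly giving the line condition.

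For the converse, I would proceed by induction on $n$, showing that any $\iota : \P^n \to R$ satisfying the line condition admits a flag on $\P^n$. The base case $n=1$ is the hypothesis. The core of the argument is $n=2$: if $\iota$ is constant on $\P^2$, it is trivially a flag map; otherwise fix a line $\ml_0 \subset \P^2$ on which $\iota$ is nonconstant, with generic value $v_0$ and unique exception $q_0$. Set $T := \{x \in \P^2 : \iota(x) \neq v_0\}$. The goal is to show that $T$ is contained in a single line $\ml$, after which the restriction $\iota|_\ml$ (a flag map on $\P^1$) supplies the flag point $\mathfrak p$, yielding the flag $\mathfrak p \subset \ml \subset \P^2$. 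To prove $T \subset \ml$, argue by contradiction: if three non-collinear points $x_1, x_2, x_3 \in T$ exist, one can choose them so no pair is collinear with $q_0$ (using that $T$ is not contained in any union of finitely many lines through $q_0$). Each $\overline{x_i x_j}$ then meets $\ml_0$ at some $p_{ij} \neq q_0$, where $\iota(p_{ij}) = v_0$; since $\iota(x_i), \iota(x_j) \neq v_0$ and $\overline{x_i x_j}$ takes at most two values, we force $\iota(x_i) = \iota(x_j) =: v$, with $p_{ij}$ being the unique exception. Thus $\iota(x_1) = \iota(x_2) = \iota(x_3) = v$, and $\iota \equiv v$ on each $\overline{x_i x_j}$ away from $p_{ij}$. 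A fourth auxiliary line, connecting a generic interior point $y \in \overline{x_1 x_2}$ (where $\iota(y) = v$) to a suitably chosen point on $\ml_0 \setminus \{q_0\}$, then produces a configuration carrying the three values $v, v_0$, and a forbidden third value, yielding the contradiction. Throughout we use that $k = \bar{\F}_p$ is infinite, so the required generic auxiliary points always exist.

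For $n \geq 3$ the inductive step goes as follows. By the $n=2$ case, every 2-plane $\Pi \subset \P^n$ has $\iota|_\Pi$ a flag map, with generic value $v_\Pi$ on $\Pi \setminus \ml_\Pi$. For any two 2-planes $\Pi_1, \Pi_2$ contained in a common 3-plane and sharing a line $L$ distinct from $\ml_{\Pi_1}$ and $\ml_{\Pi_2}$, the generic value of $\iota|_L$ is simultaneously $v_{\Pi_1}$ and $v_{\Pi_2}$, so they coincide; chaining 2-planes through overlapping 3-planes shows that all $v_\Pi$ agree to a single $v_{\rm top}$. Set $T := \iota^{-1}(R \setminus \{v_{\rm top}\})$; then $T \cap \Pi \subset \ml_\Pi$ for every 2-plane $\Pi$, which prevents $T$ from containing three non-collinear points in any 2-plane, forcing $T \subset H$ for some hyperplane $H \subset \P^n$. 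By induction $\iota|_H$ admits a flag $\mathfrak p \subset \ml \subset \cdots \subset \P_{n-2} \subset H$, and appending $H \subset \P^n$ with top value $v_{\rm top}$ completes the flag on $\P^n$.

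The main obstacle is the $n = 2$ case: engineering the final three-valued line requires a careful selection of auxiliary points, exploiting the infinitude of $\bar{\F}_p$ to dodge the finitely many "bad" lines (those through $q_0$ or tangent to the configuration). A secondary care point is the uniformity of the generic value $v_{\rm top}$ across 2-planes in the $n \geq 3$ step, handled by the connectivity chaining above.
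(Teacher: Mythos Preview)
Your $n=2$ argument has a genuine gap. You fix an arbitrary nonconstant line $\ml_0$, take its generic value $v_0$, set $T := \iota^{-1}(R \setminus \{v_0\})$, and claim $T$ lies in a line. But this is false for some choices of $\ml_0$: if $\iota$ is an honest flag map with flag $\mathfrak p \subset \ml \subset \P^2$ and pairwise distinct values $a,b,c$ on $\mathfrak p$, $\ml \setminus \mathfrak p$, $\P^2 \setminus \ml$, and you happen to pick $\ml_0 = \ml$, then $v_0 = b$, $q_0 = \mathfrak p$, and $T = \{\mathfrak p\} \cup (\P^2 \setminus \ml)$ is not contained in any line. Since this $\iota$ satisfies the line hypothesis, no contradiction can be extracted from it---and indeed your ``forbidden third value'' never materializes: the auxiliary line through a generic $y \in \overline{x_1x_2}$ (value $c$) and a point of $\ml_0 \setminus \{q_0\} = \ml \setminus \{\mathfrak p\}$ (value $b$) already meets $\ml$ away from $\mathfrak p$, hence cannot pass through $\mathfrak p$, and so carries only the two values $b$ and $c$.

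What is missing is a prior identification of the correct \emph{global} top value before forming $T$. Your $n \ge 3$ step already contains the right mechanism---chaining overlapping pieces to show their generic values agree---and it should be run in dimension two as well, on lines rather than on planes. In fact, once your argument has produced three non-collinear $x_1,x_2,x_3$ with common value $v$, you have located this top value (any line meeting two sides of the triangle $x_1x_2x_3$ away from their exceptional points carries two points of value $v$, hence has generic value $v$); the proof should then restart with $T' := \iota^{-1}(R\setminus\{v\})$ and show that \emph{this} set lies on a line. The ``fourth auxiliary line'' as written does not substitute for this step. (The present paper does not give its own proof of the proposition, deferring instead to \cite{BT}, Section~2.)
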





By \cite[Section 6.3]{BT}, flag maps are closely related to valuations: 
given a flag homomorphism $\iota:\P_k(K)\ra R$
there exists a {\em unique} $\nu\in \Val_K$ and a homomorphism $\chi:\Gamma_{\nu}\ra R$ such that 
\begin{equation}
\label{eqn:iota}
\iota=  \chi  \circ \nu. 
\end{equation}
This means that $\iota\in \rI^a_{\nu}(R)$. 

We now describe the theory of {\em commuting pairs} developed in \cite{BT}: 
We say that nonproportional $\gamma,\gamma'\in \rW^a_K(R)$ form 
a $c$-pair if for every normally closed one-dimensional 
subfield $E\subset K$ the image of the subgroup 
$R\gamma \oplus R \gamma'$ in $\rW^a_E(R)$ is cyclic; 
a $c$-subgroup is a noncyclic 
subgroup $\sigma\subset \rW^a_K(R)$ whose image in $\rW^a_E(R)$ is cyclic, for all $E$ as above.  
We define the {\em centralizer} 
$\mathrm Z_\gamma\subset \rW^a_K(R)$ of an element $\gamma\in \rW^a_K(R)$ 
as the subgroup of all elements forming a $c$-pair with $\gamma$.

The main result of \cite{BT} says:

\begin{thm}
\label{thm:pairs}
Assume that $R$ is one of the following:
$$
\Z, \,\,\hat{\Z},\,\, \Z/\ell^n,\,\, \Z_{\ell}.
$$
Then 
\begin{itemize}
\item 
every $c$-subgroup $\sigma$ has $R$-rank $\le \trdeg_k(K)$;
\item 
for every $c$-subgroup $\sigma$ there exists a valuation 
$\nu\in \Val_K$ such that 
\begin{itemize}
\item 
$\sigma$ is trivial on $(1+\mathfrak m_\nu)^\times\subset K^\times$ 
\item 
there exists a maximal 
subgroup $\sigma'\subseteq \sigma$ of $R$-corank at most one 
such that 
$$
\sigma'\subseteq \Hom(\Gamma_\nu,R)\subset \Hom(K^\times, R)=\rW^a_K(R).
$$
\end{itemize}
\end{itemize}
\end{thm}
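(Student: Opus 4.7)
The strategy is to bootstrap the algebraic hypothesis on $\sigma$ into a geometric flag condition on a linear combination of its generators, then invoke the dictionary between flag homomorphisms and valuations provided by Proposition~\ref{prop:flag-map} and~\eqref{eqn:iota}. Given any nonzero flag element $\gamma'' \in \sigma$, there is a unique $\nu \in \Val_K$ and a character $\chi : \Gamma_\nu \to R$ with $\gamma'' = \chi \circ \nu$; the remainder of $\sigma$ should then be confined to the decomposition group $\rD^a_\nu(R)$, with at most one coordinate escaping the inertia $\rI^a_\nu(R) = \Hom(\Gamma_\nu, R)$. Granting the decomposition-inertia part, the rank bound $\rk_R(\sigma) \le \trdeg_k(K)$ is a consequence of Abhyankar's inequality on the rational rank of $\Gamma_\nu$ combined with an induction on transcendence degree via the residue field.

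\textbf{From cyclicity to flag behaviour.} Pick $R$-independent generators $\gamma, \gamma' \in \sigma$. For every normally closed one-dimensional $E \subset K$, cyclicity of the image of $R\gamma \oplus R\gamma'$ in $\rW^a_E(R)$ produces a generator $\delta_E \in \rW^a_E(R)$ and $u_E, v_E \in R$ with $\gamma|_E = u_E \delta_E$ and $\gamma'|_E = v_E \delta_E$. Any projective line $\P^1 \subset \P_k(K)$ spanned by $[f], [g]$ sits inside $\P_k(E)$ for the normal closure $E$ of $k(f/g)$ in $K$, so the combination $v_E \gamma - u_E \gamma'$ vanishes along this line. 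The key point is to show that there exist $a, b \in R$ such that $[a : b] = [u_E : v_E]$ for a dominant family of $E$: for two such subfields $E_1, E_2$ sharing a transcendental $t \in E_1 \cap E_2$, the common restrictions $\gamma|_{k(t)^\times} = u_{E_i} \delta_{E_i}|_{k(t)^\times}$ and $\gamma'|_{k(t)^\times} = v_{E_i} \delta_{E_i}|_{k(t)^\times}$ force $[u_{E_1} : v_{E_1}] = [u_{E_2} : v_{E_2}]$ whenever the relevant entries are units. Since any two generic lines in $\P_k(K)$ can be linked by a chain of such shared $k(t)$'s, a universal ratio $[a : b]$ exists, and the combination $\gamma'' := a \gamma + b \gamma' \in \sigma$ vanishes on a dominant family of lines.

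\textbf{Identifying the valuation and concluding.} To promote $\gamma''$ to a flag homomorphism it suffices, by Proposition~\ref{prop:flag-map}, to check the flag property on each $\P^1 \subset \P_k(K)$: on lines where $\gamma''|_E = 0$ this is trivial, and on the remaining exceptional lines it follows from the flag structure inherited by the generators $\delta_E$ in the one-dimensional setting. Then~\eqref{eqn:iota} realises $\gamma'' = \chi \circ \nu$ for a unique $\nu \in \Val_K$. For any $\tilde\gamma \in \sigma$, applying the $c$-pair hypothesis to $(\gamma'', \tilde\gamma)$ on all one-dimensional $E \subset K$ forces $\tilde\gamma$ to be trivial on $(1+\mm_\nu)^\times$, so $\sigma \subseteq \rD^a_\nu(R)$. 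The image of $\sigma$ in $\rD^a_\nu(R)/\rI^a_\nu(R) = \rW^a_{\KK_\nu}(R)$ inherits a $c$-subgroup structure over the residue field $\KK_\nu$, which has strictly smaller transcendence degree; an induction splits off further inertial directions until the residual quotient has $R$-rank at most one, yielding the claimed corank-one subgroup $\sigma' \subseteq \sigma \cap \rI^a_\nu(R)$.

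\textbf{Main obstacle.} The critical difficulty is the propagation step over $R = \Z/\ell^n$ and $\Z_\ell$: the ``projective ratio'' $[u_E : v_E]$ is well-behaved only when at least one of the coordinates is a unit, so subfields $E$ on which $(u_E, v_E) \in \ell R \oplus \ell R$, or on which one of $\gamma|_E, \gamma'|_E$ vanishes identically, must be excluded or handled via a perturbation/approximation argument. Controlling the vanishing locus of $\gamma, \gamma'$ in $\P_k(K)$ and confirming that the generic ratio still dictates the global combination is where the bulk of the technical work lies, and is essentially the content of the core arguments in \cite{BT}.
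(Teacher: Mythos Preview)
Your overall architecture---extract a flag element from the $c$-pair hypothesis, invoke~\eqref{eqn:iota} to get $\nu$, then trap $\sigma$ between $\rI^a_\nu$ and $\rD^a_\nu$---matches the paper's outline. The serious gap is in the step you label ``From cyclicity to flag behaviour,'' specifically the claim that the ratios $[u_E:v_E]$ can be propagated along chains of one-dimensional subfields sharing a transcendental.

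Two problems. First, the chaining mechanism is empty: if $E_1,E_2\subset K$ are normally closed one-dimensional subfields and $t\in E_1\cap E_2$ is transcendental over $k$, then $E_1$ and $E_2$ both contain the normal closure of $k(t)$ in $K$, which is already one-dimensional and normally closed, so $E_1=E_2$. Hence two genuinely different $E$'s (in particular, $E$'s lying over algebraically independent directions) share \emph{no} transcendental, and your chains never leave a single subfield. Second, and more fundamentally, the premise that a universal ratio exists is false. Take $K=k(x,y)$, $\gamma=\nu$ the $y$-adic valuation, and $\gamma'\in\rD^a_\nu$ lifting the $x$-adic valuation on the residue field $\KK_\nu=k(x)$ with $\gamma'(y)=0$. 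Then $(\gamma,\gamma')$ is a $c$-pair, but on $E_1=k(x)$ one finds $(u_{E_1},v_{E_1})=(0,1)$ while on $E_2=k(y)$ one finds $(u_{E_2},v_{E_2})=(1,0)$. There is no single $(a,b)$ with $a u_E+b v_E=0$ for all $E$; the correct flag element here is $\gamma=\nu$ itself, which does \emph{not} vanish on $E_2$ but is merely a flag map there. So ``vanishes on a dominant family of lines'' is the wrong target: the flag condition on each $\P^1$ is strictly weaker than vanishing on the ambient $E$.

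The paper's route is different and avoids this trap. Rather than seeking a global ratio, it reformulates the $c$-pair hypothesis as the collineation property~$(*)$: the map $f\mapsto(\gamma(f),\gamma'(f))$ sends every projective line in $\P_k(K)$ into an affine line in $\A^2(R)$, with slope allowed to vary line by line. The extraction of a flag combination $r\gamma+r'\gamma'$ is then a combinatorial statement about such collineations, proved plane by plane via Lemma~\ref{lemm:p2} (decompositions of $\P^2(k)$ into pieces each of which is forced to be a flag subset), and glued using Proposition~\ref{prop:flag-map}. This is where the real content lies, and it is not reducible to a ratio-matching argument.
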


The groups $\sigma'$ are, in fact, inertia subgroups $\rI^a_\nu(R)$ 
corresponding to $\nu$. The union of all $\sigma$ containing an inertia subgroup 
$\rI^a_{\nu}(R)$ is  the corresponding decomposition group $\rD^a_{\nu}(R)$. 
If $\rI^a_{\nu}(R)$ is cyclic, generated by $\iota$, 
then $\rD^a_{\nu}(R)=\mathrm Z_{\iota}$, the centralizer of $\iota$.

The proof of Theorem~\ref{thm:pairs} is based on the following geometric observation, 
linking the abelian Weil group with affine/projective geometry: 
Let $\gamma,\gamma'\in \rW^a_K(R)$ be nonproportional elements forming a $c$-pair and let
$$
\begin{array}{ccc}
\P_k(K) &  \stackrel{\phi}{\lra} &  R^2 \\
  f & \mapsto &    (\gamma(f), \gamma'(f))
\end{array}
$$
be the induced map to the affine plane $\mathbb A^2(R)$. 
If follows that 
\begin{itemize}
\item[(*)]
the image of every $\P^1\subset \P_k(K)$ 
satisfies a linear relation, i.e., is contained in an affine line in $\A^2(R)$.
\end{itemize} 
Classically, this is called a {\em collineation}. 
A simple model is a map
$$
\P^2(\F_p)\stackrel{\phi}{\longrightarrow} \A^2(\F_2),
$$
where $p>2$ is a prime and where the target is a set with 4 elements. 
It turns out that when the map $\phi$ satisfies condition $(*)$ then 
the target contains only 3 points. Furthermore, on every line $\P^1\subset \P^2$
the map is constant on the complement of one point! This in turn implies that 
there is a stratification
$$
\mathfrak p \subset \mathfrak l\subset \P^2, 
$$
where $\mathfrak p$ is a point and $\mathfrak l=\P^1$ and $r,r'\in \F_2$, $(r,r')\neq (0,0)$, 
such that 
$\iota: r\gamma+r'\gamma'$ is {\em constant} on $\mathfrak l\setminus \mathfrak p$, and 
$\P^2\setminus \mathfrak l$, i.e., $\iota$ is a flag map on $\P^2$. 
(This last property fails for $p=2$.)
Nevertheless, one can extract the following general fact:
\begin{itemize}
\item if $\gamma,\gamma'$ satisfy $(*)$ then 
there exists a nontrivial $R$-linear combination $\iota:=r\gamma+r'\gamma'$ such that
$$
\iota: \P_k(K)\ra R
$$
is a flag map, i.e., $\iota\in \rI^a_{\nu}(R)$, for some $\nu\in \Val_K$. 
\end{itemize}
The proof is based on a lemma from projective geometry: 

\begin{lemm}
\label{lemm:p2}
Let $\P^2(k)=S_1\sqcup_{j\in J} S_j$ be a 
set-theoretic decomposition into at least three nonempty subsets such that for every 
$\P^1(k)\subset \P^2(k)$ we have 
$$
\P^1(k)\subseteq S_1\sqcup S_j, \,\,\text{ for some }\,\, j\in J,  \quad \text{ or }\,\,
\P^1\subseteq  \sqcup_{j\in J} S_j 
$$ 
Then one of the $S_j$ is a flag subspace of $\P^2(k)$. 
\end{lemm}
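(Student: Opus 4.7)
The plan is a three-stage structural analysis over the infinite field $k=\bar{\F}_p$.

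\emph{Stage~1.} I first show that no part $S_m$ contains a full projective line. Given $\ml_0\subseteq S_m$, the hypothesis forces the line joining suitably chosen points in distinct parts of $J$ to miss $S_1$, but any line in $\P^2$ meets $\ml_0$; choosing pairs appropriately handles both $m=1$ and $m\in J$. Consequently every line through a point $p\in S_1$ is contained in $S_1\cup S_j$ for a unique $j\in J$, and for $q\in S_j,\,q'\in S_{j'}$ with distinct $j,j'\in J$ the line $qq'$ misses $S_1$.

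\emph{Stage~2 (pencil rigidity).} Fix $p\in S_1$. Split the pencil through $p$ as $\bigsqcup_{j\in J}\Pi_j(p)$ with $\Pi_j(p)=\{\ml\ni p:\ml\subseteq S_1\cup S_j\}$, and set $U_j(p):=\bigcup_{\ml\in\Pi_j(p)}\ml$. For any line $\ml\not\ni p$, the map $q\mapsto pq$ gives a bijection $\ml\cap U_j(p)\leftrightarrow\Pi_j(p)$, so $|\ml\cap U_j(p)|$ is independent of $\ml$.

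\emph{Stage~3 (case analysis).} If $|S_1|=1$, $S_1$ is a single point, a flag subspace by Example~\ref{exam:flag}. Otherwise pick distinct $p,p'\in S_1$. If $S_1\subseteq\ml_0$ for some line $\ml_0$, let $j_0\in J$ be the unique index with $\ml_0\subseteq S_1\cup S_{j_0}$. In the affine chart $\A^2\cong\P^2\setminus\ml_0$, any $s\in S_i$ with $i\in J\setminus\{j_0\}$ satisfies $ps,p's\subseteq S_1\cup S_i$ by the Stage~1 uniqueness; since $S_1\cap\A^2=\emptyset$, the affine lines through $s$ in both the $p$- and $p'$-pencils lie in $S_i$, and the bi-pencil closure over infinite $k$ forces $S_i\cap\A^2\in\{\emptyset,\A^2\}$. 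Disjointness and non-emptiness of the $S_i$ single out a unique $i_0\in J\setminus\{j_0\}$ with $S_{i_0}=\P^2\setminus\ml_0$, a flag subspace (this also shows the sub-case $S_1\subseteq\ml_0$ cannot occur when $|J|\ge 3$). When $S_1$ is not contained in any line, pick three non-collinear $p_1,p_2,p_3\in S_1$ and run the analogous tri-pencil analysis in the three affine charts $\P^2\setminus p_ip_j$; combining the rigidity from Stage~2 with the cross-part constraint from Stage~1, one identifies either $S_1$ itself or some $S_j$ as a flag subspace.

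The main obstacle is this tri-pencil sub-case: one must extend the bi-pencil ``$\emptyset$-or-full'' dichotomy to three pencils with care, and combinatorially track how the parts absorb the different pencils across the three affine charts. This bookkeeping, relying on the cross-part constraint together with the pencil invariance, is the combinatorial crux of the proof.
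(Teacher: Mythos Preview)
The paper does not actually prove Lemma~\ref{lemm:p2}; it is quoted as ``a lemma from projective geometry'' and presumably established in the reference~\cite{BT}. So there is no in-paper argument to compare against, and your sketch must stand on its own.

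Your Stages~1 and~2 are correct and well organized. In Stage~1 the two cases ($m=1$ and $m\in J$) are both handled by the same mechanism: a line contained in one part would force some line joining points of two distinct parts in $J$ to meet $S_1$ while also meeting two different $S_j$'s, contradicting the hypothesis. The pencil count in Stage~2 is the standard identification of a line avoiding $p$ with the pencil through $p$. In Stage~3, the case $|S_1|=1$ is fine (and your reading of the conclusion as allowing $S_1$ to be the flag piece is the one used in Lemma~\ref{lemm:prep}; note that there really are decompositions with $S_1=\{p\}$ and no $S_j$, $j\in J$, a flag subspace, so this reading is forced). The case $S_1\subseteq\ml_0$ is also correct: the bi-pencil step works because $S_1\cap\A^2=\varnothing$, so moving along the $p$- and $p'$-directions stays inside a single $S_i$, and you do not in fact need $k$ infinite for that two-step closure.

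The genuine gap is your last sub-case, which you flag yourself but do not resolve. The bi-pencil argument does \emph{not} transfer: once $S_1$ is not contained in any line you lose the crucial feature $S_1\cap\A^2=\varnothing$, so a step along a $p_a$-line only lands you in $S_1\cup S_i$, not in $S_i$ alone, and the ``$\varnothing$-or-full'' dichotomy for $S_i\cap\A^2$ evaporates. Saying that one will ``run the analogous tri-pencil analysis'' and ``combinatorially track'' the parts is not yet an argument. One workable route is to show that in this case there is a \emph{unique} line $\ml_0$ missing $S_1$, whence $S_1=\P^2\setminus\ml_0$ is itself the flag piece. For this, assume a second such line $\ml_1$ exists and use that projection from any $p\in S_1$ gives a colour-preserving projectivity $\ml_0\to\ml_1$; varying $p$ over three non-collinear points of $S_1$ produces enough colour-preserving projectivities of $\ml_0$ to itself to force the colouring of $\ml_0$ (hence of all of $T=\sqcup_{j}S_j$) to be of a very restricted shape, leading to a contradiction with the ``at least three nonempty parts'' assumption. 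Until something along these lines is written down, the proof is incomplete precisely at its hardest point.
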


These considerations lead to 
the following characterization of multiplicative groups of 
valuation rings, one of the main results of \cite{BT}:

\begin{prop}
\label{prop:nu}
Let $\mo^\times\subset K^\times/k^\times$ be a subgroup such that
its  intersection with any projective plane $\P^2\subset \P_k(K)=K^\times/k^\times$ 
is a flag subspace, i.e., 
its set-theoretic characteristic function is a flag function.
Then there exists a $\nu\in \Val_K$ such that 
$\mo^\times=\mo_{\nu}^\times/k^\times$.
\end{prop}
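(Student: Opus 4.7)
The plan is to construct a valuation $\nu \in \Val_K$ with $\mo_\nu^\times/k^\times = \mo^\times$ by showing that the quotient homomorphism $\pi \colon \P_k(K) \to \Gamma := \P_k(K)/\mo^\times$ is a flag homomorphism, and then invoking the flag-map/valuation correspondence \eqref{eqn:iota}. By Proposition~\ref{prop:flag-map}, it suffices to verify that $\pi|_{\P^1}$ is a flag map for every $\P^1 \subset \P_k(K)$. Every such line embeds in a $\P^2$ (since $\trdeg_k(K) \geq 2$), so by the hypothesis $\mo^\times \cap \P^1 = \pi|_{\P^1}^{-1}(0)$ is one of the flag subsets listed in Example~\ref{exam:flag}: $\emptyset$, $\P^1$, a single point $\{p\}$, or $\P^1 \setminus \{p\}$. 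In the first, second, and fourth cases $\pi|_{\P^1}$ is already constant off one point, and we are done.

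The outstanding case $\mo^\times \cap \P^1 = \{p\}$ requires showing that $\pi$ is constant on $\P^1 \setminus \{p\}$; here Lemma~\ref{lemm:p2} is the main tool. Embed $\P^1$ in a $\P^2$ and consider the partition $\P^2 = S_1 \sqcup \bigsqcup_{\gamma \neq 0} S_\gamma$ with $S_1 := \mo^\times \cap \P^2$ and $S_\gamma := \pi^{-1}(\gamma) \cap \P^2$. The essential combinatorial claim---precisely the hypothesis of Lemma~\ref{lemm:p2}---is that every line $\P^1 \subset \P^2$ either lies in $S_1 \cup S_\gamma$ for a single $\gamma$, or else avoids $S_1$ entirely; once established, this forces $\pi|_{\P^1}$ to take at most two values and hence to be a flag map. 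The claim is verified via case analysis on the flag subspaces $\mo^\times \cap \P^2$ enumerated in Example~\ref{exam:flag}, using the subgroup structure of $\mo^\times$ to eliminate the non-valuational configurations. The prototypical obstruction, $\mo^\times \cap \P^2_{1,g,g^{-1}} = \P^2 \setminus \ml$ with $\ml$ the line through $[g]$ and $[g^{-1}]$, is ruled out by the identity $[(1+g)(1+g^{-1})] = [(1+g)^2/g]$: both $[1+g]$ and $[1+g^{-1}]$ would lie in $\mo^\times$, and the subgroup axiom would then force $[g] \in \mo^\times$, contrary to assumption.

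Once $\pi$ is shown to be a flag homomorphism, \eqref{eqn:iota} produces a unique $\nu \in \Val_K$ and a group homomorphism $\chi \colon \Gamma_\nu \to \Gamma$ with $\pi = \chi \circ \nu$; the same flag/subgroup argument forces $\ker\chi$ to be a convex subgroup of $\Gamma_\nu$, so passing to the coarsening of $\nu$ modulo $\ker\chi$ produces the valuation $\nu'$ with $\mo^\times = \mo_{\nu'}^\times/k^\times$. The main obstacle is the combinatorial case analysis sustaining Lemma~\ref{lemm:p2}'s hypothesis across all $\P^2 \subset \P_k(K)$: the flag data is local on each $\P^2$, while the target structure of an ordered quotient group is global, and one must systematically play the flag constraints against multiplicative identities in $\mo^\times$---like the one above---to rule out every partition incompatible with the stratifications of Example~\ref{exam:flag}.
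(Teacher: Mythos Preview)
The paper does not prove Proposition~\ref{prop:nu}; it is quoted as one of the main results of \cite{BT}. So there is no in-paper proof to compare against, and I assess your proposal on its own merits.

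Your overall plan---show that the quotient $\pi\colon\P_k(K)\to\Gamma:=\P_k(K)/\mo^\times$ is a flag homomorphism and then invoke \eqref{eqn:iota}---is the correct strategy, and reducing to lines via Proposition~\ref{prop:flag-map} is right. But the execution has real gaps.

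A minor one first: the case $\mo^\times\cap\P^1=\emptyset$ is \emph{not} already done. Knowing $\pi^{-1}(0)\cap\P^1=\emptyset$ tells you nothing about how many distinct values $\pi$ takes on $\P^1$. This is easily repaired by observing that every line is a multiplicative translate of a line through $1$ (which does meet $\mo^\times$) and that $\pi$ intertwines translation in $\P_k(K)$ with translation in $\Gamma$; but you do not make this reduction.

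The serious gap is your handling of the case $\mo^\times\cap\ml(1,x)=\{1\}$. You identify the goal---that every line in $\P^2$ meeting $S_1$ lies in $S_1\cup S_\gamma$ for a single $\gamma$---as the \emph{hypothesis} of Lemma~\ref{lemm:p2}, and then call that lemma ``the main tool''. But what you need \emph{is} that hypothesis; the lemma's conclusion (some $S_j$ is a flag subspace) is already known for $S_1=\mo^\times\cap\P^2$ and yields nothing new. You then propose to verify the hypothesis by case analysis plus the subgroup structure, but carry out only one case, and it is the wrong one. Your identity $(1+g)(1+g^{-1})=(1+g)^2/g$ rules out the configuration $\mo^\times\cap\P^2(1,g,g^{-1})=\P^2\setminus\ml(g,g^{-1})$, where $1+g,\,1+g^{-1}\in\mo^\times$. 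In the hard case at hand, however, $\ml(1,x)\cap\mo^\times=\{1\}$, so $1+x\notin\mo^\times$ and the identity is vacuous. What actually must be shown is that one cannot have simultaneously $\ml(1,x)\cap\mo^\times=\{1\}$ and $\ml(1,x^{-1})\cap\mo^\times=\{1\}$; once one of these equals the line minus a point, constancy of $\pi$ on $\ml(1,x)\setminus\{1\}$ follows immediately from the group law, since $[1+cx]/[x]=[1+c^{-1}x^{-1}]$. Ruling out the bad simultaneous configuration requires working in specific planes such as $\P^2(1,x,x^2)$, using $x^2\notin\mo^\times$ to constrain the possible flag shapes, and then further multiplicative identities; none of this is supplied. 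The final coarsening step (convexity of $\ker\chi$) is likewise asserted rather than argued.
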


\

We now turn to more classical objects, namely Galois groups of fields.
Let $G_K$ be the absolute Galois group of a field $K$, $G^a_K$ its abelianization and 
$G^c_K=G_K/[G_K,[G_K,G]]$ its canonical central extension. 
Let  $K$ be a function field of transcendence degree $\ge 2$ over $k=\bar{\F}_p$. 
Fix a prime $\ell\neq p$ and
replace $G^a_K$ and $G^c_K$ by their maximal pro-$\ell$-quotients 
$$
\G^a_K=\Hom(K^\times/k^\times, \Z_{\ell})=\rW^a_K(\Z_{\ell}), \quad \text{ and } \quad \G^c_K.
$$
Note that $\G^a_K$ is a torsion-free $\Z_{\ell}$-module of infinite rank.  

A  {\em commuting pair} is a pair of nonproportional $\gamma,\gamma'\in \G^a_K$ which lift
to commuting elements in $\G^c_K$ (this property does not depend on the choice of the lift).
The main result of the theory of commuting pairs in \cite{BT} says
that if
\begin{itemize}
\item[(**)]
$\gamma, \gamma'\in \G^a_K$ form a commuting pair
\end{itemize}
then the $\Z_{\ell}$-linear span of $\gamma, \gamma'\in \G^a_K$ 
contains an inertia element of some valuation $\nu$ of $K$. 

A key observation is that Property $(**)$ implies
$(*)$, for each $\P^2_k\subset \P_k(K)$, which leads to a flag structure on $\P_k(K)$, 
which in turn gives rise to a valuation. 
For a related result on reconstruction of valuations see 
\cite{efrat-minac-11a}.

Commuting pairs are part of an intricate {\em fan} $\Sigma_K$ on $\G^a_K$, which is defined as the set
of all topologically noncyclic subgroups of $\G^a_K$ which lift to commuting subgroups of $\G^c_K$. 
We have:
\begin{itemize}
\item $\rk_{\Z_{\ell}}(\sigma)\le \trdeg_k(K)$, for all $\sigma\in \Sigma_K$;
\item every $\sigma\in \Sigma_K$ contains a subgroup of corank one which is the inertia
subgroup of some valuation of $K$. 
\end{itemize}

Intersections of subgroups in $\Sigma_K$ reflect subtle dependencies among valuations of $K$. 
Let $\I^a_{\nu}:=\rI^a_{\nu}(\Z_{\ell})\subset \G^a_K$ be the subgroup of 
inertia elements with respect to $\nu\in \Val_K$ and $\D^a_{\nu}=\rD^a_{\nu}(\Z_{\ell})$
the corresponding {\em decomposition group}. In our context, $\D^a_{\nu}$ is also 
the group of all elements $\gamma\in \G^a_K$ forming a commuting pair with every $\iota\in \I^a_{\nu}$. 
As noted above, the definitions of inertia and decomposition groups 
given here are equivalent to the classical definitions in 
Galois theory. We have
$$
\G^a_{\KK_{\nu}} = \D^a_{\nu}/\I^a_{\nu},
$$
the Galois group of the residue field $\KK_{\nu}$ of $\nu$, 
and $\Sigma_{\KK_{\nu}}$ is the set of projections of subgroups of $\sigma\in \Sigma_K$ 
which are contained in $\D^a_{\nu}$. 
When $X$ is a surface, $K=k(X)$, and $\nu$ a {\em divisorial} valuation of $K$, 
we have $\I^a_{\nu}\simeq \Z_{\ell}$ and $\D^a_{\nu}$ is a large group spanned by subgroups $\sigma$ of rank two 
consisting of all element commuting with the topological generator $\delta_{\nu}$ of $\I^a_{\nu}$.

\

To summarize, the Galois group $\G^c_K$ encodes information about
affine and projective structures on $\G^a_K$, in close parallel to what we saw 
in the context of $\rK$-theory in Section~\ref{sect:proj}.
These structures are so individual that they allow to recover the field, via
the reconstruction of the projective structure on $\P_k(K)$:

\begin{thm}
\label{thm:main} \cite{bt0}, \cite{bt1}
Let $K$ and $L$ be function fields of transcendence degree $\ge 2$ over $k=\bar{\F}_p$ and $\ell\neq p$ a prime. 
Let 
$$
\psi^*:\G^a_L\ra \G^a_K
$$
be an isomorphism of abelian pro-$\ell$-groups inducing a bijection of sets
$$
\Sigma_L=\Sigma_K.
$$
Then there exists an isomorphism of perfect closures
$$
\bar{\psi}: \bar{K}\ra \bar{L},
$$ 
unique modulo rescaling by a constant in $\Z_{\ell}^\times$. 
\end{thm}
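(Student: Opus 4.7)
The plan is to recover the projective structure on $\P_k(K)$ from the pair $(\G^a_K,\Sigma_K)$, and then invoke Theorem~\ref{thm:projective}. By Kummer theory, $\G^a_K=\Hom(K^\times/k^\times,\Z_\ell)$; since $k=\bar{\F}_p$ and $\ell\neq p$, the group $K^\times/k^\times$ is torsion-free and has no nonzero $\ell$-divisible elements, so continuous $\Z_\ell$-duality reconstructs $\P_k(K)=K^\times/k^\times$ inside the continuous dual of $\G^a_K$, up to an overall $\Z_\ell^\times$-scaling. Dualizing the given $\psi^*$ therefore produces an isomorphism $\psi_*:\P_k(K)\to\P_l(L)$ of abelian groups, defined modulo $\Z_\ell^\times$.

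Next I would extract valuation data from the fan. By Theorem~\ref{thm:pairs}, every $\sigma\in\Sigma_K$ singles out a valuation $\nu\in\Val_K$, with the inertia subgroup $\I^a_\nu\subseteq\sigma$ recovered as the maximal corank-$\le 1$ subgroup on which the flag condition holds, and the decomposition subgroup $\D^a_\nu$ recovered as the union of all $\sigma'\in\Sigma_K$ containing $\I^a_\nu$. Since $\psi^*$ identifies $\Sigma_L$ with $\Sigma_K$ by hypothesis, we obtain a canonical bijection $\Val_K\leftrightarrow\Val_L$ together with matched inertia and decomposition subgroups. Dualizing through Proposition~\ref{prop:nu}, the multiplicative groups $\mo^\times_\nu/k^\times\subset\P_k(K)$, characterized as exactly those subgroups whose intersection with every $\P^2\subset\P_k(K)$ is a flag subspace, are carried by $\psi_*$ onto the analogous subgroups in $\P_l(L)$.

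To finish via Theorem~\ref{thm:projective} we must show that $\psi_*$ (or its inverse) sends projective lines to projective lines. Following the strategy of Theorem~\ref{thm:milnor}, the first substep is to recover the infinite-dimensional subspaces $\P_k(E)\subset\P_k(K)$ attached to normally closed one-dimensional subfields $E\subset K$: such $E$ arise as residue fields along generic flags of valuations, and the iterated quotients $\D^a_\nu/\I^a_\nu=\G^a_{\KK_\nu}$ together with their induced fans $\Sigma_{\KK_\nu}$ let us pin these subspaces down from $(\G^a_K,\Sigma_K)$ by induction on $\trdeg_k(K)$. Projective lines $\P^1\subset\P_k(K)$ are then obtained as suitable intersections $\P_k(E_1)\cap\P_k(E_2)$ for generic independent one-dimensional $E_1,E_2\subset K$, exactly as in \cite[Theorem~22]{bt-milnor}. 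Theorem~\ref{thm:projective} then delivers a field isomorphism $K^\times/k^\times\simeq L^\times/l^\times$ compatible with the abelian structure; passing to perfect closures absorbs the residual $p$-power Frobenius ambiguity, while the dichotomy $\psi_*$ versus $\psi_*^{-1}$ reflects the $\pm 1$ indeterminacy already seen in Theorem~\ref{thm:milnor}.

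The main obstacle is precisely this last step: characterizing intrinsically which corank-one subgroups of $\G^a_K$ correspond to normally closed one-dimensional subfields, and verifying that enough of them are captured so that the resulting family of reconstructed lines satisfies Pappus' axiom. This relies on a delicate analysis of how rank-two $c$-subgroups $\sigma\in\Sigma_K$ interact with divisorial valuations and their residue fans, and it is here that the full strength of the fan hypothesis, as opposed to merely the abelian isomorphism $\psi^*$, is essential.
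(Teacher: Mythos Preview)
The paper does not actually prove this theorem: it is stated with citations to \cite{bt0} and \cite{bt1}, and the surrounding text only sketches the philosophy (``recover the field, via the reconstruction of the projective structure on $\P_k(K)$''). So there is no in-paper proof to compare against; your outline is broadly consonant with the strategy the paper advertises and with the proof sketch of Theorem~\ref{thm:milnor}.

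That said, your sketch has a genuine gap at the very first step, the duality. From $\G^a_K=\Hom(K^\times/k^\times,\Z_\ell)$ you cannot recover $K^\times/k^\times$ by continuous $\Z_\ell$-duality alone: the double dual gives you the pro-$\ell$ completion $\widehat{K^\times}$, and an abstract isomorphism $\psi^*$ of $\G^a$'s only yields an isomorphism of these completions. Identifying the $\Z$-lattice $K^\times/k^\times$ inside $\widehat{K^\times}$ is precisely one of the hard points of \cite{bt0}, \cite{bt1}: one first uses the fan $\Sigma_K$ to single out \emph{divisorial} inertia elements among all inertia elements (this is already nontrivial and is not the same as Theorem~\ref{thm:pairs}, which only guarantees \emph{some} valuation), and then uses the resulting system of divisorial valuations to pin down the integral lattice. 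Your sentence ``continuous $\Z_\ell$-duality reconstructs $\P_k(K)$ \ldots\ up to an overall $\Z_\ell^\times$-scaling'' elides exactly this, and without it the map $\psi_*$ you want to feed into Theorem~\ref{thm:projective} does not yet exist as a map of abelian groups $K^\times/k^\times\to L^\times/l^\times$.

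Your honest acknowledgment of the second obstacle (intrinsic recognition of the subgroups $\P_k(E)$ for normally closed one-dimensional $E$) is appropriate; that is indeed where most of the work in \cite{bt0}, \cite{bt1} lies, and the paper under review does not attempt it either.
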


\section{$\Z$-version of the Galois group}
\label{sect:version}

In this section, we introduce a functorial version of the {\em reconstruction / recognition} 
theories presented in Sections~\ref{sect:proj} and \ref{sect:galois-proj}. 
This  version allows to recover not only field isomorphisms
from data of $\rK$-theoretic or Galois-theoretic type, but also {\em sections}, i.e., 
rational points on varieties over higher-dimensional function fields. 

We work in the following setup: 
let $X$ be an algebraic variety over $k=\bar{\mathbb F}_p$, 
with function field $K=k(X)$. We use the notation:
\begin{itemize}
\item for $x,y\in K^\times$ we let $\ml(x,y)\subset \P_k(K)$ be the projective line
through the images of $x$ and $y$ in $K^\times/k^\times$;
\item for planes $\P^2(1,x,y)\subset \P_k(K)$ 
(the span of lines $\ml(1,x)$, $\ml(1,y)$) we set
$\P^2(1,x,y)^\circ :=\P^2(1,x,y)\setminus \{1\}$.  
\end{itemize}
We will say that $\bar{x},\bar{y}\in K^\times/k^\times$ 
are algebraically dependent, and write $\bar{x}\approx \bar{y}$, 
if this holds for any of their lifts $x,y$ to $K^\times$.

Define the {\em abelian Weil group}
$$
\rW_K^a:=\Hom(K^\times/k^\times, \Z)=\Hom(K^\times,\Z),
$$
with discrete topology. Since $K^\times/k^\times$ is a free abelian group, 
$\rW^a_K$ is a torsion-free, infinite rank $\Z$-module. 
The functor 
$$
K\mapsto \rW^a_K
$$ 
is contravariant; 
it is a $\Z$-version of $\G^a_K$, since it can be regarded as a $\Z$-sublattice of 
$$
\G^a_K=\Hom(K^\times/k^\times,\Z_{\ell}), \quad \ell\neq p.
$$ 

We proceed to explore a functorial version of Theorem~\ref{thm:main}
for $(\rW^a_K,\Sigma_K)$, where $\Sigma_K$ is the corresponding {\em fan}, i.e., 
the set of $c$-subgroups $\sigma\subset \rW^a_K$. 
We work with the following diagram

\

\centerline{
\xymatrix{
 K \ar[r]^{\psi} & L  \\
 K^\times/k^\times \ar[r]^{\psi_1} & L^\times/l^\times \\
 \rW^a_K                          & \ar[l]_{\psi^*} \rW^a_L
}
}

\

\noindent
where $K$ and $L$ are function fields over 
algebraically closed ground fields $k$ and $l$. 
We are interested in situations when 
$\psi_1$
maps subgroups of the form $E^\times/k^\times$, where $E\subset K$ 
is a subfield with $\trdeg_k(E)=1$, into similar subgroups in $L^\times/l^\times$.
The {\em dual} homomorphisms 
\begin{equation}
\label{eqn:psiw}
\psi^*: \rW_L^a\ra \rW_K^a
\end{equation}
to such $\bar{\psi}_1$ 
{\em respect the fans}, in the following sense: 
for all $\sigma\in \Sigma_L$ either 
$\psi^*(\sigma)\in \Sigma_K$ or $\psi^*(\sigma)$ is cyclic. 


\begin{exam}
\label{exam:main}
The following examples of homomorphisms $\psi^*$ 
as in Equation~\ref{eqn:psiw} arise from geometry:
\begin{enumerate}
\item 
If  $X\to Y$ is a dominant map of varieties over $k$ then
$k(Y)= L\subset  K= k(X)$ and the induced homomorphism
$$
\psi^*: \rW_K^a\ra \rW_L^a
$$ 
respects the fans. 
\item Let $\pi : X\ra Y$ be a dominant map of varieties over $k$ and $s:Y\ra X$ a section of $\pi$. 
There exists a valuation $\nu\in \Val_K$ with center $s(Y)$ such that
$$
L^\times \subset \mo_{\nu}^\times \subset K^\times
$$
and the natural projection 
$$
\rho_{\nu}:\mo_{\nu}^\times  \ra \KK_{\nu}^\times
$$ 
onto the multiplicative group of the residue field   
induces an isomorphism
$$
L^\times \simeq \KK_{\nu}^\times
$$
which extends to an isomorphism of fields
$$
L\simeq \KK_{\nu}.
$$
Let $\tilde{\rho}_{\nu}:K^\times \ra L^\times$ be any multiplicative extension of $\rho_{\nu}$ to $K^\times$.
Such $\tilde{\rho}_{\nu}$ map multiplicative subgroups of the form $k(x)^\times$ to similar subgroups of $L^\times$, 
i.e., the dual map
$$
\rW^a_L\ra \rW^a_K
$$
preserves the fans. 
\item More generally, 
let $\nu\in \Val_K$ be a valuation, with valuation ring $\mo_{\nu}$, maximal ideal $\mm_{\nu}$ and residue field $\KK_{\nu}$.
Combining the exact sequences

\

\centerline{
\xymatrix{  & 1 \ar[r] & \mo_{\nu}^\times \ar[r]\ar@{=}[d] & K^\times \ar[r]^{\nu} & \Gamma_{\nu}\ar[r] & 1 \\
          1 \ar[r]& (1+\mm_{\nu})^\times \ar[r] &  \mo_{\nu}^\times \ar[r]^{\rho_{\nu}} & \KK_{\nu}^\times\ar[r]  & 1  
}
}

\

\

\noindent
we have an exact sequence 
$$
1\ra \KK_{\nu}^\times \ra (1+\mm_{\nu})^\times \backslash K^\times \ra \Gamma_{\nu}\ra 1.
$$
Let $\KK_{\nu} = k(Y)$, for some algebraic variety $Y$ over $k$, and let $L$ be any function field containing $\KK_{\nu}=k(Y)$. 
Assume that there is a diagram

\centerline{
\xymatrix{
\mo_{\nu}^\times/k^\times \ar[d]_{\rho_{\nu}} \ar@{^(->}[r] & K^\times/k^\times  \ar[d]^{\psi_1} \\
\KK_{\nu}^\times/k^\times \ar[r]_{\phi} & L^\times/k^\times 
}
}
 
\

\noindent
where $\phi$ is injective and $\psi_1$ 
is an extension of $\phi$. Such extensions exist provided
we are given a splitting of $\nu :K^\times\ra \Gamma_{\nu}$. 
In this case, we will say that $\psi_1$ is {\em defined by a valuation}. 
The dual map to such $\psi_1$ respects the fans. 
Theorem~\ref{thm:main2} shows a converse result: {\em any} $\psi_1$  
respecting one-dimensional subfields can be obtained via this construction.

We proceed to describe the restriction of $\psi_1$ to projective subspaces of $\P_k(K)$, when
$\psi_1$ is obtained from this geometric construction; in particular, $\psi_1$ is not injective. 
In this case, for any $E=k(x)\subset K$ the restriction of
$\bar \psi_1$ to $E^\times/k^\times$ is either
\begin{itemize}
\item 
injective, or
\item 
its restriction to $\ml(1,x)$ is constant on the complement of one point, i.e., 
it factors through a valuation homomorphism.
\end{itemize}
On planes $\P^2(1,x,y)$ 
there are more possibilities: an inertia element 
$$
\iota=\iota_{\nu}\in \rW^a_K=\Hom(K^\times/k^\times,\Z)
$$
restricts to any $\P^2=\P^2_k(1,x,y)$ as a flag map, in particular, it takes at most 
three values. This leads to the following cases:
\begin{enumerate}
\item $\iota$ is constant on $\P^2(1,x,y)$; then
$\P^2(1,x,y)\subset \mo_{\nu}^\times/k^\times$ since
the corresponding linear space does not intersect the maximal ideal
$\mm_{\nu}$ and contains 1. 
The projection 
$$
\rho_\nu : \P^2(1,x,y)\to \KK_{\nu}^\times/k^\times 
$$ 
is injective.
If $x,y$ are algebraically independent the values of $\psi_1$ on 
$\P^2(1,x,y)^\circ$
are algebraically independent in $\KK_{\nu}^\times$.
\item
$\iota$ takes two values and (after an appropriate multiplicative shift)
is constant on  $\P^2(1,x,y)\setminus \{x\}$; in this case
$\psi_1$ on  $\P^2(1,x,y)\setminus \{x\}$ is a composition
of a projection from $x$ onto $\ml(1,y)$ and an embedding of 
$\ml(1,y)\hookrightarrow \KK_{\nu}^\times/k^\times $ with the map $x\to\iota(x)$.
\item
$\iota$ takes two values and is constant on the complement of a projective line, say $\ml(x,y)$; 
then $\psi_1\equiv 1$ on the complement to $\ml(x,y)$.
Note that $1/x \cdot \ml(x,y)\subset \mo_{\nu}^\times$ and hence embeds into
$\KK_{\nu}^\times/k^\times$.
\item
$\iota$ takes three values.
\end{enumerate}
The proof of Theorem~\ref{thm:main2} below relies on a 
reconstruction of these and similar flag structures from fan data, more precisely, 
it involves a construction of a special multiplicative
subset $\mo^\times\subset K^\times/k^\times$ which will be equal to $\mo_{\nu}^\times/k^\times$, 
for some valuation $\nu\in \Val_K$. 
\end{enumerate}
\end{exam}

\begin{thm}
\label{thm:main2}
Assume that $K,L$ are function fields
over algebraic closures of finite fields $k,l$, respectively.
Assume that
\begin{itemize}
\item[(a)] $$
\psi_1: K^\times/k^\times\to  L^\times/l^\times
$$ 
is a homomorphism such that for any one-dimensional
subfield $E\subset K$,
there exists a one-dimensional subfield $F\subset L$ with 
$$
\psi_1(E^\times/k^\times)\subseteq F^\times/l^\times,
$$  
\item[(b)] 
$\psi_1(K^\times/k^\times)$ contains
at least two algebraically independent elements of $L^\times/l^\times$.
\end{itemize}
If $\psi_1$ is not injective then
\begin{enumerate}
\item there is a $\nu\in \Val_K$ such that
$\psi_1$ is trivial on 
$(1+\mm_{\nu})^\times/k^\times \subset \mo_{\nu}^\times/k^\times$; 
\item the restriction of $\psi_1$ to  
\begin{equation}
\label{eqn:restr}
\KK_{\nu}^\times/k^\times = \mo_{\nu}^\times/k^\times(1+\mm_{\nu})^\times \to L^\times/l^\times
\end{equation}
is injective and satisfies (a).
\end{enumerate}

If $\psi_1$ is injective, then there exists a subfield 
$F\subset L$, a field isomorphism 
$$
\phi : K \stackrel{\sim}{\lra} F \subset L,
$$ 
and an integer $m\in \Z$, coprime to $p$, such that 
$\psi_1 $ coincides with the homomorphism induced by
$\phi^m$.
\end{thm}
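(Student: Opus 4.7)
\

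\no
\textbf{Proof proposal.} The plan is to analyze $\psi_1$ locally on projective lines and planes in $\P_k(K)$, exploiting condition (a) to force a flag/valuation structure in the non-injective case and projective-structure preservation in the injective case. The final assertions then follow from Proposition~\ref{prop:nu} and Theorem~\ref{thm:projective}, respectively.

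First I would fix $x \in K \setminus k$ and examine the restriction of $\psi_1$ to the projective line $\ml(1,x) \subset \P_k(K)$. By (a), this restriction lands in $F^\times/l^\times$ for some one-dimensional $F \subset L$. Since $\ml(1,x)$ is in bijection with $\P^1(k)$ and $\psi_1$ is a group homomorphism, the homomorphism constraints inside $k(x)^\times/k^\times$ together with the requirement that the image sit in a one-dimensional subfield should force the following dichotomy: $\psi_1|_{\ml(1,x)}$ is either injective, or it is a flag map, i.e., constant on the complement of a single point. Comparing the two alternatives across the planes $\P^2(1,x,y)$, for algebraically independent $x,y$ as guaranteed by (b), gives a consistent picture along the lines of the case analysis of Example~\ref{exam:main}.

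Assuming $\psi_1$ is not injective, I would extract a global valuation from this local data. Define $\mo^\times \subset K^\times/k^\times$ to consist of those $f$ lying in the ``unit stratum" of the local flag on every plane containing $1$ and $f$. By the previous step and by Proposition~\ref{prop:flag-map}, the intersection of $\mo^\times$ with every plane $\P^2 \subset \P_k(K)$ is a flag subspace, and Proposition~\ref{prop:nu} then produces a $\nu \in \Val_K$ with $\mo^\times = \mo_\nu^\times/k^\times$. A finer analysis distinguishing the two- versus three-value flag behaviour of $\psi_1$ (again following Example~\ref{exam:main}) identifies $\ker(\psi_1)$ with $(1+\mm_\nu)^\times/k^\times$. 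The induced map on $\KK_\nu^\times/k^\times = (\mo_\nu^\times/k^\times)/(1+\mm_\nu)^\times$ is injective by maximality of the $\nu$ so constructed, and property (a) descends because one-dimensional subfields of $\KK_\nu$ lift through splittings of $\rho_\nu$ to one-dimensional subfields of $K$ contained in $\mo_\nu$; this yields parts (1) and (2).

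If $\psi_1$ is injective, the alternative in the first step forces $\psi_1|_{\ml(1,x)}$ to be a projective-linear bijection onto a line in $\P_l(F)$ composed with an $m(x)$-th power map for some integer $m(x)$ coprime to $p$ (divisibility by $p$ would conflict with the torsion-free structure of $K^\times/k^\times$ over $k=\bar{\F}_p$). Comparing across lines sharing a common point and invoking (b) shows $m(x)$ is independent of $x$, call it $m$. The resulting $\psi_1$ then preserves projective lines on $\P_k(K)$ up to this fixed $m$-th power, and Theorem~\ref{thm:projective} produces the desired field embedding $\phi: K \to L$ with $\psi_1$ induced by $\phi^m$ on multiplicative groups. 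The main obstacle is the middle step: globalizing line-by-line flag behaviour into a single valuation. Condition (a) only constrains $\psi_1$ on one-dimensional subfields, and the coherence of the local flag strata across two-dimensional containing subfields is what actually produces the valuation; this requires the commuting-pair machinery of Theorem~\ref{thm:pairs} together with Lemma~\ref{lemm:p2}, paralleling the technical core of \cite{BT} and the proof of \cite[Theorem~22]{bt-milnor}.
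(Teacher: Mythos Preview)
Your overall shape is right---extract a valuation from plane-by-plane flag behaviour and feed it into Proposition~\ref{prop:flag-map}/Proposition~\ref{prop:nu} in the non-injective case, and defer the injective case to \cite{bt-milnor} and Theorem~\ref{thm:projective}---but the central step is both different from the paper's argument and contains a genuine gap.

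The dichotomy you assert on lines, that $\psi_1|_{\ml(1,x)}$ is either injective or a flag map, does not follow from condition (a): (a) only says the image lies in some $F^\times/l^\times$, and nothing rules out a non-injective, non-flag restriction (e.g.\ a map that is two-to-one on a cofinite set). The paper never establishes or uses such a dichotomy. Instead it organizes everything around the decomposition $\P_k(K)=S_1\sqcup_f S_f$, where $S_1=\psi_1^{-1}(1)$ and the $S_f$ are equivalence classes under $\psi_1(f')\approx\psi_1(f)$ (Lemma~\ref{lemm:contain}). The plane analysis (Lemma~\ref{lemm:prep}) then feeds this decomposition into Lemma~\ref{lemm:p2} to show that on any $\P^2$ meeting the nondegeneracy condition, non-injectivity forces one of two explicit flag-type configurations. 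That lemma is the technical heart, and your outline bypasses it.

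The paper's $\mo^\times$ is also built differently from your ``unit stratum'' set: one takes $\mathfrak u$ to be the union of those lines $\ml(1,x)$ on which $\psi_1$ \emph{is} injective and proves (Lemma~\ref{lemm:moo}, using Lemma~\ref{lemm:prep}) that $\mo^\times:=\mathfrak u\cdot\mathfrak u$ is a multiplicative subgroup; Proposition~\ref{prop:general-case} then verifies the flag condition line by line and applies Proposition~\ref{prop:flag-map}. After this ``general case'' the paper still needs a separate endgame, Cases (A) and (B), covering the situation where $\mathfrak u$ fails to contain two elements with algebraically independent images; your proposal has no analogue of this split. Finally, you invoke Theorem~\ref{thm:pairs}, but the proof here does not use the commuting-pair theorem at all---only Lemma~\ref{lemm:p2} and Proposition~\ref{prop:flag-map}.
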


\

The case of injective $\psi_1$ has been treated in \cite{bt-milnor}. 
The remainder of this section is devoted to the proof of Theorem~\ref{thm:main2} 
in the case when $\psi_1$ is not injective.

\

An immediate application of Condition (a) is: 
if $f\approx f'$, i.e., $f,f'$ are algebraically dependent then $\psi_1(f)\approx \psi_1(f')$. 
The converse does not
hold, in general, and we are lead to introduce the following decomposition:
\begin{equation}
\label{eqn:decom}
\P_k(K) = S_1\sqcup_{f} S_f,
\end{equation}
where 
$$
S_1:=\psi_1^{-1}(1)
$$ 
and, for $f\notin S_1$, 
$$
S_f:=\{ f'\in \P_k(K)\setminus S_1 \,|\,  \psi_1(f')\approx \psi_1(f)\}\subset \P_k(K)
$$ 
are equivalence classes of elements whose images are algebraically dependent.
We record some properties of this decomposition:

\begin{lemm}
\label{lemm:contain}

\

\begin{enumerate}
\item For all $f$, the set $S_1\sqcup S_f$ is closed under multiplication, 
$$
f',f''\in S_1\sqcup S_f \Rightarrow f'\cdot f''\in S_1\sqcup S_f.
$$
\item Every projective line $\ml(1,f)\subset \P_k(K)$, with $\psi_1(f)\neq 1$, is contained in $S_1\sqcup S_f$.
\item Assume that $f,g$ are such that $\psi_1(f),\psi_1(g)$ are 
nonconstant and distinct. If $\psi_1(f)\approx \psi_1(g)$ then $\ml(f,g)\in S_1\sqcup S_f$. Otherwise, 
$\ml(f,g)\cap S_1=\emptyset$.
\item Let $\Pi\subset \P_k(K)$ be a projective subspace such that there exist 
$x,y,z\in \Pi$ with distinct images and such that $\psi_1(x/z)\not\approx \psi_1(y/z)$. 

Then, for any 
$h\in K^\times/k^\times$, the projective subspace $\Pi':=h\cdot \Pi$ satisfies the same property.
\end{enumerate}
\end{lemm}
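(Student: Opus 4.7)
The plan is to prove each of the four assertions by reducing, via a multiplicative shift, to Condition~(a) applied to a carefully chosen one-dimensional subfield $E \subset K$. Besides Condition~(a), the only input I need is the elementary transitivity of algebraic dependence: if $a$ and $b$ are both algebraic over $l(c)$, then so is $ab$. I expect no serious obstacle; the slightly delicate step will be the nonvanishing statement in the second half of (3), which requires a contradiction argument reading containment information off the target one-dimensional subfield.

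For (1), I would invoke transitivity directly. If $\psi_1(f'), \psi_1(f'') \in \{1\} \cup \{u : u \approx \psi_1(f)\}$, then $\psi_1(f' f'') = \psi_1(f') \psi_1(f'')$ lies in the same set, since a product of elements algebraic over $l(\psi_1(f))$ remains algebraic over $l(\psi_1(f))$. For (2), I would note that every projective class in $\ml(1, f)$ is represented by some $\alpha + \beta f \in k(f)^\times$, so $\ml(1, f) \subset k(f)^\times/k^\times$. Condition~(a) produces a one-dimensional $F \subset L$ with $\psi_1(k(f)^\times/k^\times) \subseteq F^\times/l^\times$. Since $\psi_1(f) \neq 1$ by assumption, it is transcendental over $l$ and $F/l(\psi_1(f))$ is algebraic; hence every element of $F^\times/l^\times$ is either trivial or algebraically dependent on $\psi_1(f)$, which yields $\ml(1,f) \subset S_1 \sqcup S_f$.

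For (3), I would use the identity $\ml(f, g) = g \cdot \ml(1, f/g)$ in $\P_k(K)$. Since $\psi_1(f)$ and $\psi_1(g)$ are distinct and nonconstant, $\psi_1(f/g) \neq 1$. If $\psi_1(f) \approx \psi_1(g)$, then transitivity yields $\psi_1(f/g) \approx \psi_1(f)$, and parts~(2) and (1) together force $\ml(f, g) \subset S_1 \sqcup S_f$. For the converse, I would argue by contradiction: suppose $\psi_1(f) \not\approx \psi_1(g)$ but some $h = \alpha f + \beta g \in \ml(f, g)$ satisfies $\psi_1(h) = 1$. Neither $\alpha$ nor $\beta$ can vanish, since otherwise $\psi_1(f)$ or $\psi_1(g)$ would be $1$. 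Then $h/g = \beta + \alpha(f/g) \in k(f/g)^\times$, and Condition~(a) applied to $E = k(f/g)$ yields a one-dimensional $F \subset L$ containing both $\psi_1(h/g) = \psi_1(g)^{-1}$ and $\psi_1(f/g)$. Hence $\psi_1(g) \in F^\times/l^\times$ and therefore $\psi_1(f) = \psi_1(f/g)\psi_1(g) \in F^\times/l^\times$ as well, so the two elements sit in a common one-dimensional subfield of $L$, contradicting $\psi_1(f) \not\approx \psi_1(g)$.

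Finally, (4) is immediate: multiplication by $h$ is a bijection on $\P_k(K)$ that carries the triple $x, y, z$ to the still-distinct triple $hx, hy, hz$, and the ratios $(hx)/(hz) = x/z$, $(hy)/(hz) = y/z$ are preserved, so the hypothesis transfers verbatim to $\Pi'$. The only genuinely subtle moment in the whole lemma is the extraction step in the converse of (3), where one reads off membership of $\psi_1(g)$ in the one-dimensional target $F \subset L$ from the identity $\psi_1(h/g) = \psi_1(g)^{-1}$; everything else is formal bookkeeping.
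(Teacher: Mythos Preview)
Your proof is correct and follows essentially the same approach as the paper: both arguments use the multiplicative shift $\ml(f,g)=f\cdot\ml(1,g/f)$ (you use $g\cdot\ml(1,f/g)$, which is symmetric) together with Condition~(a) to push the image into a coset of a one-dimensional $F^\times/l^\times$, and both treat (1), (2), (4) as immediate consequences of $\psi_1$ being a homomorphism plus Condition~(a). Your contradiction argument for the second half of (3) is just the contrapositive of the paper's direct observation that $\psi_1(f)\cdot\P_l(F)$ misses $1$ when $\psi_1(f)\notin\P_l(F)$; one tiny point to make explicit in (4) is that the \emph{images} $\psi_1(hx),\psi_1(hy),\psi_1(hz)$ remain distinct (not just the triple itself), which follows since $\psi_1$ is a homomorphism.
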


\begin{proof}
The first property is evident. 
Condition (a) of Theorem~\ref{thm:main2} implies that for every $f\in K^\times/k^\times$
we have
\begin{equation}
\label{eqn:sub}
\psi_1(\ml(1,f))\subset \P_l(F), \quad F=l(\psi_1(f)),
\end{equation}
this implies the second property. 

Considering the shift 
$\ml(f,g)=f\cdot \ml(1,g/f)$ we see that
$$
\psi_1(\ml(f,g)\subseteq \psi_1(f)\cdot \psi_1(\ml(1,g/f)) \subset \psi_1(f)\cdot \P_l(F), 
$$ 
where $F=l(\psi_1(g/f))$. If $\psi_1(f)\approx \psi_1(g/f)$ then $\psi_1(\ml(f,g))\subset S_1\sqcup S_f$. 
Otherwise, $\psi_1(\ml(f,g))$ is disjoint from 1. 

To prove the last property is suffices to remark that $\psi_1(\Pi')$ contains
$\psi_1(hx), \psi_1(hy), \psi_1(hz)$,  and $\psi_1(hx/hz)\not\approx \psi_1(hy/hz)$. 
\end{proof}

\begin{lemm}
\label{lemm:prep}
Let $\Pi=\P^2\subset \P_k(K)$ be a projective plane
satisfying condition (4) of Lemma~\ref{lemm:contain}
and such that the restriction of 
$\psi_1$ to $\Pi$ is not injective.
Then 
\begin{itemize}
\item[(1)] 
there exists a line $\ml\subset \Pi$ 
such that $\psi_1$ is constant on $\Pi\setminus \ml$ or
\item[(2)] 
there exists a $g\in \Pi$ such that 
\begin{itemize}
\item $\psi_1$ is constant on every punctured line $\ml(g,f)\setminus g$;
\item $\psi_1(g)\not\approx \psi_1(f)$, for every $f\neq g$;
\item $\psi_1(f)\approx \psi_1(f')$ for all $f,f'\notin g$.   
\end{itemize}
\end{itemize}  
\end{lemm}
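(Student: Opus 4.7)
The plan is to reduce the dichotomy to Lemma~\ref{lemm:p2} after a preliminary multiplicative shift. Using non-injectivity of $\psi_1|_\Pi$, pick distinct $p,q\in\Pi$ with $\psi_1(p)=\psi_1(q)$ and set $\Pi':=q^{-1}\Pi$. By Lemma~\ref{lemm:contain}(4), condition~(4) persists for $\Pi'$, and $T'_1:=\psi_1^{-1}(1)\cap\Pi'$ now contains the two distinct elements $1=q^{-1}q$ and $q^{-1}p$. Any geometric conclusion on $\Pi'$ transfers to $\Pi=q\cdot\Pi'$ via multiplication by $q$, which sends lines to lines and points to points.

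Decompose $\Pi'=T'_1\sqcup\bigsqcup_{C}T'_{C}$, where $T'_{C}:=\psi_1^{-1}(C)\cap\Pi'$ and $C$ runs over algebraic-dependence classes of nontrivial elements of $L^\times/l^\times$. Condition~(4) yields two distinct nontrivial classes with nonempty $T'_{C}$, so the decomposition has at least three nonempty parts. For each line $\ml\subset\Pi'$ we verify the hypothesis of Lemma~\ref{lemm:p2}: if $\ml\cap T'_1=\emptyset$ then $\ml\subset\bigsqcup_{C}T'_{C}$ automatically; otherwise pick $p_0\in\ml\cap T'_1$ and, for any other $f\in\ml$, write $\ml=p_0\cdot\ml(1,f/p_0)$. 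Since $\psi_1(p_0)=1$, condition~(a) gives $\psi_1(\ml)\subset\P_l(F)$ for a one-dimensional $F\subset L$; all nontrivial elements of $F^\times/l^\times$ being mutually algebraically dependent, this forces $\ml\subset T'_1\sqcup T'_{C}$ for a single class $C$. Lemma~\ref{lemm:p2} then produces a flag subspace among the parts.

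By Example~\ref{exam:flag} the flag subspace is a point, a line, a punctured line, or the complement of one of these. Shapes incompatible with $|T'_1|\ge2$ or with the presence of at least two nontrivial classes are discarded. The two essential surviving configurations are: \textbf{(i)} $T'_1=\Pi'\setminus\ml'$ for some line $\ml'\subset\Pi'$, which says $\psi_1\equiv1$ on $\Pi'\setminus\ml'$ and immediately yields case~(1) for $\Pi$ (with $\ml=q\ml'$ and constant value $\psi_1(q)$); and \textbf{(ii)} $T'_{C_0}=\{g'\}$ for a single point $g'\in\Pi'$, where $g'$ is the unique point of $\Pi'$ with image in class $C_0$, leading to case~(2) with projection point $g=qg'$ after showing that all other nontrivial images lie in a single remaining class $C_1$ and that $\psi_1$ is pointwise constant on every punctured line $\ml(g',f)\setminus\{g'\}$. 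Intermediate flag shapes (a line or punctured line equal to some $T'_1$ or $T'_{C}$) are reduced to (i) or (ii) by a further shift or by examining a complementary part of the decomposition.

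The main obstacle lies in step~(ii): the raw output of Lemma~\ref{lemm:p2} only says that on each line through $g'$ the nontrivial images land in a single algebraic-dependence class, and one must upgrade this to pointwise constancy on each punctured line through $g'$, then verify the three conditions of case~(2) without losing control under the shift (algebraic dependence being not invariant under multiplicative shifts in general). This refinement requires extracting a line-level dichotomy for $\psi_1$ -- either injective or constant on the complement of one point -- from condition~(a), the flag property at $g'$, and the coset structure $\psi_1(g')\cdot\P_l(F)$ on each such line.
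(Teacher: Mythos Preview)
Your plan---shift so that $|T'_1|\ge 2$, decompose by dependence classes, check the line hypothesis of Lemma~\ref{lemm:p2}, and case-split on the resulting flag shape---is the paper's approach. Two pieces are genuinely missing. First, you never establish the paper's Step~1: no part $T'_1$ or $T'_C$ contains a projective line. (If $\ml\subset T'_1$, every point of $\Pi'$ lies on a line through $\ml$ and hence in $T'_1\sqcup T'_C$ for a single fixed $C$, contradicting condition~(4); if $\ml\subset T'_C$, every line through $1$ meets $\ml$, again forcing one class.) Without this, ``shapes incompatible with \ldots\ are discarded'' is not justified: the shapes $T'_C=\ml\setminus\{g'\}$ and $T'_C=\Pi'\setminus\ml$ both survive and each requires its own argument. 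The paper's Step~4 devotes three separate subcases to exactly these, which you compress to ``a further shift or examining a complementary part''. Note also that the paper applies Lemma~\ref{lemm:p2} to an \emph{arbitrary} two-block coarsening $S_1\sqcup S'\sqcup S''$ of the nontrivial classes rather than to the full decomposition; the freedom in this split is exploited in Step~4, third bullet, to force $S'$ down to a single class.

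Second, your proposed resolution of the acknowledged ``main obstacle'' in~(ii) is misdirected. There is no general line-level dichotomy (injective vs.\ constant off a point) extractable from condition~(a) and the coset structure; that dichotomy is the \emph{conclusion} you are after, not an available input. The paper gets pointwise constancy on each $\ml(g,f)\setminus\{g\}$ directly from Lemma~\ref{lemm:contain}(3): once one has shown that every $f\neq g$ lies in a single class $S_{f_1}$ while $g\notin S_1\cup S_{f_1}$, suppose $f_1,f_2\in\ml\setminus\{g\}$ have distinct nonconstant images; then Lemma~\ref{lemm:contain}(3) gives $\ml=\ml(f_1,f_2)\subset S_1\sqcup S_{f_1}$, forcing $g\in S_1\cup S_{f_1}$, a contradiction. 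So the real work in case~(ii) is pinning the single class on $\Pi'\setminus\{g\}$ (done in the paper via the line $\ml(1,g)^\circ\subset S_1$ and incidence with it); the constancy then drops out in two lines with no dichotomy needed.
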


\begin{proof}
After an appropriate shift and relabeling, and using Lemma~\ref{lemm:contain}, 
we may assume that $\Pi=\P^2(1,x,y)$ and that
$\psi_1^{-1}(1)$ contains a nontrivial element $z\in \P^2(1,x,y)$, i.e., $z\in S_1$.
Let 
$$
\P^2(1,x,y)=S_1\sqcup_{f\in \mathcal F} S_f
$$
be the decomposition induced by \eqref{eqn:decom}. 

{\em Step 1.} 
Neither of the sets $S_1$ nor $S_f$ contains a line. 
Indeed, assume that there is a projective line $\ml\subseteq S_1$ and
let $g\in \P^2(1,x,y)\setminus S_1$. 
Every $f'\in \P^2(1,x,y)$  lies on a line through $g$ and $\ml(f',g)$ which intersects $\ml$, and thus $S_1$, 
i.e., all $f'$ lie in $S_1\sqcup S_g$, by Lemma~\ref{lemm:contain}. 
Assume that $\ml\subset S_f$. Every $g\in \P^2(1,x,y)\setminus S_1$ 
lies on a line of the form $\ml(1,g)$, which intersects $\ml\subseteq S_f$. It follows that $g\in S_f$, 
contradicting our assumption that $\psi_1(\P^2(1,x,y))$  contains at least two 
algebraically independent elements.

\

{\em Step 2.}
Split $\mathcal F=\mathcal F'\sqcup \mathcal F''$ into nonempty subsets, arbitrarily, and let 
$$
\P^2(1,x,y)=S_1\sqcup S'\sqcup S'', \quad S':=\sqcup_{f'\in \mathcal F'} S_{f'}, \quad  S':=\sqcup_{f''\in \mathcal F'} S_{f''}
$$
be the induced decomposition.  
By Lemma~\ref{lemm:contain}, every $\ml$ is in either
$$
S_1\sqcup S', \,\, S_1\sqcup S'', \,\, \text{ or } \,\, S'\sqcup S''.
$$
By Lemma~\ref{lemm:p2}, one of these subsets is a flag subset of $\P^2(1,x,y)$.

\

{\em Step 3.}
Assume that $S_1$ is a flag subset. Since it contains 
at least two elements and does not contain
a projective line, by Step 1, we have:
\begin{itemize}
\item 
$S_1=\P^2\setminus \ml$, for some line $\ml$, 
and we are in Case (1), or 
\item 
$S_1=\ml(1,z)^\circ = \ml(1,z)\setminus g$, for some $g\in S'$ (up to relabeling).

Choose a $g''\in S''$, so that  
$\psi_1(g'')\not\approx \psi_1(g)$, and let $\ml$ be a line through $g''$, $\ml\neq \ml(g,g'')$. 
Then $\ml$ intersects $\ml(1,z)^\circ=S_1$, 
which implies that 
the complement $\ml\setminus (\ml\cap \ml(1,z)^\circ) \subseteq S''$. It follows that 
$S''$ contains the complement of $\ml(1,z)\cap \ml(g,g'')$. 
Considering projective lines through $1$ and elements in $S''$ and applying Lemma~\ref{lemm:contain} 
we find that $S'' \supseteq \P^2(1,x,y)\setminus \ml(1,z)$. 
Since $g\notin S''$, we have equality. Thus all elements in $\P^2(1,x,y)\setminus g$ 
have algebraically dependent
images. If $\psi_1$ were not constant on a line $\ml$ through $g$,
with $\ml\neq \ml(1,z)$, let $f_1,f_2\in \ml$
be elements with $\psi_1(f_1)\neq \psi_1(f_2)$.
Lemma~\ref{lemm:contain} implies that $g\in S_{f_1}$, contradicting  our assumption 
that $\psi_1(g)\not\approx\psi_1(f_1)$. 
Thus we are in Case (2).  
\end{itemize}

\

{\em Step 4.}
Assume that $S'$ is a flag subset and $S_1$ is not. We have the following cases:
\begin{itemize}
\item $S'=\{g\}$. Then $S'=S_{g}$ and $\ml(1,g)^\circ:= \ml(1,g)\setminus g\subseteq S_1$.
Assume that there exist  $f_1,f_2\in \P^2(1,x,y)\setminus \ml(1,g)$ with  $\psi_1(f_1)\neq\psi_1(f_2)$. 
If $f_2\notin \ml(g,f_1)$ then $\psi_1(f_1)\approx \psi_1(f_2)$, as $\ml(f_1,f_2)$ intersects $\ml(1,g)^\circ$. 
If there exists at least one $f_2\notin \ml(g,f_1)$ with nonconstant $\psi_1(f_2)$, then 
by the argument above, all elements on the complement to $\ml(1,g)$ are algebraically dependent, 
on every line $\ml$ through $g$, $\psi_1$ is constant on $\ml\setminus g$, and 
we are in Case (2). If $\psi_1$ is identically 1 on $\P^2(1,x,y)\setminus \ml(g,f_1)$ then $\psi_1\neq 1$ on 
$\ml(g,f_1)$ (otherwise $S_1$ would contain a projective line). Thus $S_1$ is a flag subset, contradiction. 
\item 
$S'=\ml^\circ=\ml\setminus g$, for some line $\ml$ and $g\in \ml$. 
Since $S_1$ has at least two elements, 
there is a $z'\in (\P^2\setminus \ml)\cap S_1$.
Lemma~\ref{lemm:contain} implies that $\psi_1$ equals 1 on $\ml(z',g')\setminus g'$, for all 
$g'\in S'=\ml^\circ$. Similarly, $\psi_1$ equals 1 on $\ml(g,z')^\circ:=\ml(g,z')\setminus g$, 
as every point on this punctured line 
lies on a line passing through $S_1$ and intersecting  $S'$. 
Thus $S_1\supseteq \P^2(1,x,y)\setminus \ml$ and  
since $S_1$ does not contain a line, these sets must be equal. 
It follows that we are in Case (1). 
\item
Assume that $S'=\P^2(1,x,y)\setminus \ml$ and we are not in the previous case.
Then $\ml$ contains at least two points in $S_1$ and the complement 
$S''=\ml\setminus (\ml\cap S_1)$ also has at least two points,  $f_1'', f_2''$.  
Thus $S''=S_{f''}$ for some $f''$, by Lemma~\ref{lemm:contain}.  
Since we were choosing the splitting $\mathcal F=\mathcal F'\sqcup \mathcal F''$ arbitrarily, 
we conclude that $S'=S_{f'}$ for some $f'$. 

The same argument as in Step 3. implies that
$\psi_1$ is constant on $\ml(g',f_i'') \setminus f_i''$ for
any $g'\in S'$. Hence  $\psi_1$ is constant  on $\P^2(1,x,y)\setminus \ml$ and we are in Case (1).
\end{itemize}
\end{proof}

\begin{lemm}
\label{lemm:moo}
Let 
$$
\mathfrak u :=\cup\, \ml(1,x) \subseteq K^\times/k^\times=\P_k(K)
$$ 
be union over all lines such that $\psi_1$ is injective on $\ml(1,x)$. 
Assume that there exist nonconstant $x,y\in \mathfrak u$ such that 
$\psi_1(x)\not\approx\psi_1(y)$.
Then 
$$
\mo^\times:=\mathfrak u \cdot \mathfrak u\subset \P_k(K)
$$ 
is a multiplicative subset. 
\end{lemm}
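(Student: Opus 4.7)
\textbf{Proof plan for Lemma~\ref{lemm:moo}.}
The plan is to show that $\mo^\times = \mathfrak u\cdot \mathfrak u$ is closed under multiplication. First I would record two elementary reductions. Since $1$ lies on every line $\ml(1,x)$, we have $1\in \mathfrak u$, so $\mathfrak u\subseteq \mo^\times$. Moreover, $\mathfrak u$ is closed under inversion: multiplication by $u^{-1}$ is a $\psi_1$-equivariant bijection $\ml(1,u)\to \ml(1,u^{-1})$ (a shift by a homomorphism preserves injectivity of $\psi_1$), so $u\in \mathfrak u$ implies $u^{-1}\in \mathfrak u$. Consequently $(\mo^\times)^{-1}=\mo^\times$. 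It then suffices to prove the single inclusion $\mathfrak u^3\subseteq \mathfrak u^2$, since iterating gives $\mathfrak u^n\subseteq \mathfrak u^2$ for all $n\ge 2$, and in particular $\mo^\times\cdot \mo^\times=\mathfrak u^4\subseteq \mathfrak u^2=\mo^\times$.

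Fix $u_1,u_2,u_3\in \mathfrak u$ (with $u_1\ne 1$ after relabeling, the case of all $u_i=1$ being trivial) and set $h:=u_1u_2u_3$. If $h\in \mathfrak u$ we are done via $h=1\cdot h$, so assume $h\notin \mathfrak u$. I need to produce $m\in \mathfrak u$ with $hm^{-1}\in \mathfrak u$; equivalently, since translation by $m$ is $\psi_1$-equivariant, $\psi_1$ must be injective on both $\ml(1,m)$ and $\ml(m,h)$. The natural candidate plane is $\Pi:=\P^2(1,u_1,h)$, a genuine $2$-plane because $\ml(1,u_1)\subseteq \mathfrak u$ whereas $h\notin \mathfrak u$ forces $h\notin \ml(1,u_1)$. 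Granted $\Pi$ satisfies condition~(4) of Lemma~\ref{lemm:contain}, Lemma~\ref{lemm:prep} classifies $\psi_1|_\Pi$ as either injective, in Case~(1) with some exceptional line $\ml$, or in Case~(2) with some center $g$. The injective line $\ml(1,u_1)\subseteq \Pi$, together with the infinitude of $k$, forces $\ml=\ml(1,u_1)$ in Case~(1) and $g\notin \ml(1,u_1)$ in Case~(2), since otherwise $\psi_1$ would take only finitely many values on the infinite line $\ml(1,u_1)$. In every configuration other than Case~(1) with $\ml=\ml(1,u_1)$, I would pick $m\in \ml(1,u_1)\setminus\{1\}$ (so $\ml(1,m)=\ml(1,u_1)$ and $m\in \mathfrak u$ automatically), additionally avoiding in Case~(2) the single point $\ml(1,u_1)\cap \ml(g,h)$. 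Then $\ml(m,h)\subseteq \Pi$ avoids $g$, $\psi_1$ is injective on it, and the decomposition $h=m\cdot (hm^{-1})$ has both factors in $\mathfrak u$.

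The hard part will be the degenerate subcases where $\Pi$ fails condition~(4), or falls into Case~(1) with exceptional line $\ml=\ml(1,u_1)$. In the latter, $\psi_1$ is constant on $\Pi\setminus \ml(1,u_1)$ and a direct check rules out every $m\in \Pi$. The remedy is to choose $m\in \mathfrak u$ outside $\Pi$ and run the same analysis on $\P^2(1,m,h)$; this is precisely where the hypothesis of the lemma enters essentially. The existence of nonconstant $x,y\in \mathfrak u$ with $\psi_1(x)\not\approx \psi_1(y)$ supplies an abundance of candidates $m\in \mathfrak u$ whose $\psi_1$-images are algebraically independent from $\psi_1(h)$, securing condition~(4) for $\P^2(1,m,h)$. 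A case analysis analogous to the previous paragraph shows that, for such $m$, every Case~(1) configuration of $\P^2(1,m,h)$ except $\ml=\ml(1,m)$ is incompatible with $m\in \mathfrak u$, and the remaining obstruction is Case~(2) with center $g_m$ lying on $\ml(m,h)$. The principal technical hurdle is to verify that both bad loci -- $m$ producing Case~(1) with $\ml=\ml(1,m)$, and $m$ with $g_m\in \ml(m,h)$ -- form proper subsets of $\mathfrak u$, so a valid $m$ exists; this should follow from a dimension count using the richness of $\mathfrak u$ guaranteed by the hypothesis, yielding the desired factorization and completing the inclusion $\mathfrak u^3\subseteq \mathfrak u^2$.
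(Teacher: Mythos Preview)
Your reduction to showing $\mathfrak u^3\subseteq\mathfrak u^2$ (together with $1\in\mathfrak u$ and $\mathfrak u^{-1}=\mathfrak u$) is correct and matches the paper's setup. The gap is in how you attack this inclusion. You work in the plane $\P^2(1,u_1,h)$, and later $\P^2(1,m,h)$, each of which carries only \emph{one} known injective line through $1$; this is precisely why Case~(1) of Lemma~\ref{lemm:prep} with $\ml=\ml(1,m)$ survives your analysis. The proposed ``dimension count'' to show that the set of such bad $m$ is a proper subset of $\mathfrak u$ is not an argument: nothing you have established rules out the possibility that for \emph{every} $m\in\mathfrak u$ with $\psi_1(m)\not\approx\psi_1(h)$ the plane $\P^2(1,m,h)$ falls into Case~(1) with exceptional line $\ml(1,m)$. (Incidentally, your second ``bad locus'', Case~(2) with $g_m\in\ml(m,h)$ and $g_m\neq h$, is not actually an obstruction: replacing $m$ by a generic $m'\in\ml(1,m)$ moves $\ml(m',h)$ off $g_m$.)

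The paper sidesteps the difficulty by never looking at $h=u_1u_2u_3$ directly. Its key step is: if $x,y\in\mathfrak u$ with $\psi_1(x)\not\approx\psi_1(y)$, then already $xy\in\mathfrak u$. One considers $\P^2(1,x,y^{-1})$, which contains \emph{two} distinct injective lines $\ml(1,x)$ and $\ml(1,y^{-1})$ through $1$. This instantly kills both alternatives of Lemma~\ref{lemm:prep}: Case~(1) would force the single exceptional line $\ml$ to coincide with both, and Case~(2) would force $\psi_1(x)\approx\psi_1(y^{-1})$. Hence $\psi_1$ is injective on the entire plane, and after the multiplicative shift by $y$ also on $\ml(1,xy)\subset y\cdot\P^2(1,x,y^{-1})$. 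From here $\mathfrak u^3\subseteq\mathfrak u^2$ is immediate: if some pair satisfies $\psi_1(u_i)\not\approx\psi_1(u_j)$ then $u_iu_j\in\mathfrak u$ and we are done; if all three $\psi_1(u_i)$ lie in a common $F^\times/l^\times$, pick $w\in\mathfrak u$ with $\psi_1(w)\notin F$ (guaranteed by the hypothesis) and write $u_1u_2u_3=(u_1w\cdot u_3)\cdot(w^{-1}u_2)$, with each factor in $\mathfrak u$ by the first step. The missing idea in your plan is exactly this ``two injective lines'' trick, which is what forces injectivity on the whole plane and makes the case analysis collapse.
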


\begin{proof}
First of all, if $x\in \mathfrak u$ then $x^{-1}\in\mathfrak u$, 
since $\ml(1,x)=x\cdot \ml(1,x^{-1})$.

It suffices to show that $\mathfrak u \cdot \mo^\times = \mo^\times$, i.e., 
for all $x,y,z\in \mathfrak u$ one has $xyz=tw$, for some $t,w\in \mathfrak u$.

Assume first that $\psi_1(x)\not\approx\psi_1(y)$ and consider $\P^2(1,x,y^{-1})$. 
A shift of this plane contains the line $\ml(1,xy)$. If $xy\notin \mathfrak u$ then 
$\psi_1$ is not injective on this plane and we may apply Lemma~\ref{lemm:prep}. We are not in Case (1)
and not in Case (2), since $\psi_1$ injects $\ml(1,x)$ and $\ml(1,y^{-1})$, contradiction. 
Thus $\psi_1$ is injective on $\P^2(1,x,y^{-1})$ and 
$xy=t$, for some $t\in \mathfrak u$, which proves the claim. 

Now assume that 
$\psi_1(x),\psi_1(y),\psi_1(z)\in F^\times/l^\times$, for some 1-dimensional $F\subset L$.
By assumption, there exists a $w\in K^\times/k^\times$ such that 
$\psi_1(w)$ is algebraically independent of $F^\times/l^\times$ and such that $\ml(1,w)$ injects.
By the previous argument, $\psi_1$ is injective on the lines $\ml(1,xw)$ and $\ml(1,w^{-1}y)$,
so that $xw,w^{-1}y\in \mathfrak u$. By our assumptions, $\psi_1$ is injective on 
$\ml(1,z)$. Now we repeat the previous argument for 
$xw$ and $z$: there is a $t\in \mathfrak u$ such that
$xw\cdot z = t$.  
\end{proof}

\begin{prop}
\label{prop:general-case}
Let 
$$
\psi_1:K^\times/k^\times\ra L^\times/l^\times
$$ 
be a homomorphism satisfying the assumptions of Theorem~\ref{thm:main2} and
such that $\mathfrak u\subset \P_k(K)$
contains $x,y$ with $\psi_1(x)\not\approx \psi_1(y)$.
Let $\mathfrak o^\times = \mathfrak u\cdot \mathfrak u\subset K^\times/k^\times$ be the
subset defined above. 
Then  
$$
\nu : K^\times/k^\times\to K^\times/\mo^\times
$$
is a valuation homomorphism, i.e., there exists an ordered abelian group $\Gamma_{\nu}$
with $K^\times/\mo^\times\simeq \Gamma_{\nu}$ and $\nu$ is the valuation map. 
\end{prop}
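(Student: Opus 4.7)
The plan is to reduce the statement to Proposition~\ref{prop:nu}: it suffices to show that for every projective plane $\Pi \subset \P_k(K)$, the intersection $\mo^\times \cap \Pi$ is a flag subspace. Once this is established, Proposition~\ref{prop:nu} yields a valuation $\nu \in \Val_K$ with $\mo^\times = \mo_\nu^\times/k^\times$, whence $K^\times/\mo^\times \simeq K^\times/\mo_\nu^\times = \Gamma_\nu$, and the quotient map $\nu$ is the sought valuation homomorphism.

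By Lemma~\ref{lemm:moo}, $\mo^\times$ is a multiplicative subgroup, so the flag condition is invariant under multiplicative shifts, and it is enough to verify it on planes $\Pi = \P^2(1,x,y)$ containing $1$. The analysis branches on the behavior of $\psi_1|_\Pi$. If $\psi_1|_\Pi$ is injective, then for every $z \in \Pi \setminus \{1\}$ the line $\ml(1,z) \subset \Pi$ is a line on which $\psi_1$ is injective, so $\Pi \setminus \{1\} \subset \mathfrak{u} \subset \mo^\times$ and $\mo^\times \cap \Pi = \Pi$ is (degenerately) a flag subspace. If $\psi_1|_\Pi$ is not injective and $\Pi$ satisfies condition~(4) of Lemma~\ref{lemm:contain}, then Lemma~\ref{lemm:prep} places $\Pi$ in Case~(1) (a line $\ml \subset \Pi$ with $\psi_1$ constant on $\Pi \setminus \ml$) or Case~(2) (a point $g \in \Pi$ with $\psi_1$ constant on every $\ml(g,f) \setminus g$). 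If condition~(4) fails on $\Pi$, the image $\psi_1(\Pi)$ lies inside a single $\P_l(F)$ with $\trdeg_l F = 1$, and this degenerate case is handled separately using the hypothesis that $\mathfrak{u}$ contains algebraically independent elements.

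In each case I would explicitly identify $\mo^\times \cap \Pi$ via the following dual characterization: $w \in \mathfrak{u}$ iff $\psi_1|_{\ml(1,w)}$ is injective, and $w \in \mo^\times$ iff $w = u_1 u_2$ for some $u_1, u_2 \in \mathfrak{u}$ (possibly lying outside $\Pi$). For instance, in Case~(1) of Lemma~\ref{lemm:prep} I expect $\mo^\times \cap \Pi$ to equal $\Pi \setminus \ml$ or $\ml$ according as $1 \notin \ml$ or $1 \in \ml$, and in Case~(2) I expect $\Pi \setminus \{g\}$ or $\Pi \setminus \ml(1,g)$ according as $g \neq 1$ or $g = 1$. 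To confirm such an identification for a candidate unit $w \in \Pi$, I would exploit the abundance hypothesis: choose an auxiliary $u \in \mathfrak{u}$ with $\psi_1(u) \not\approx \psi_1(w)$, and argue on the auxiliary plane $\P^2(1, u, w)$ that $\psi_1$ injects on $\ml(1, w u^{-1})$ (neither sub-case of Lemma~\ref{lemm:prep} can apply, given the chosen algebraic independence), so $wu^{-1} \in \mathfrak{u}$ and hence $w = u \cdot (w u^{-1}) \in \mathfrak{u} \cdot \mathfrak{u}$. A symmetric argument rules out that any element that should not be a unit lies in $\mathfrak{u} \cdot \mathfrak{u}$, using that a line on which $\psi_1$ collapses cannot be reassembled from two injective ones.

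The main obstacle is precisely this global-versus-local tension: membership in $\mo^\times$ is not determined by $\psi_1|_\Pi$ alone, since the witnesses $u_1, u_2$ may lie outside $\Pi$. The delicate step is to transport information between planes by multiplicative shifts and careful choice of auxiliary points from $\mathfrak{u}$ — ensuring the resulting $\mo^\times \cap \Pi$ matches exactly one of the admissible flag configurations of Example~\ref{exam:flag}, neither larger nor smaller. Once this bookkeeping is complete on every plane, Proposition~\ref{prop:nu} closes the argument and produces the valuation $\nu$.
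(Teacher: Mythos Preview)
Your reduction to Proposition~\ref{prop:nu} is a legitimate strategy, but the paper takes a shorter route that works with \emph{lines} rather than planes. After noting via Lemma~\ref{lemm:moo} that $\mo^\times$ is a subgroup, the paper checks that the quotient homomorphism $\nu: K^\times/k^\times \to K^\times/\mo^\times$ is a flag map on every $\P^1 \subset \P_k(K)$. Any $\P^1$ is a multiplicative shift $x \cdot \ml(1,y)$, and since $\nu$ is a homomorphism this reduces to lines through $1$; there the dichotomy is simply whether $\ml(1,y) \subset \mathfrak{u}$ (so $\nu \equiv 1$) or not. Proposition~\ref{prop:flag-map}, together with the remark following it that a flag homomorphism on $\P_k(K)$ arises from a valuation, then closes the argument. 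No case analysis on planes via Lemma~\ref{lemm:prep} is invoked at all.

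This line-based criterion is exactly what dissolves the ``global-versus-local tension'' you flag as the main obstacle: whether $\ml(1,y)$ lies in $\mathfrak{u}$ is decided by $\psi_1|_{\ml(1,y)}$ alone, whereas whether a \emph{point} lies in $\mo^\times = \mathfrak{u}\cdot\mathfrak{u}$ requires witnesses that may live elsewhere. Your plane-by-plane approach forces you to confront that non-locality head-on, and as written you have not resolved it --- the identifications of $\mo^\times \cap \Pi$ in Cases~(1) and~(2), and especially in the degenerate case where condition~(4) of Lemma~\ref{lemm:contain} fails, are stated as expectations (``I expect'', ``I would argue'') rather than proved. So the proposal is a plausible outline of an alternative argument, but with a real gap at precisely the point you yourself identify; the paper's method sidesteps it entirely.
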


\begin{proof}
By Lemma~\ref{lemm:moo}, we know that $\mo^\times$ is multiplicative. 
We claim that the restriction of $\nu$ to every
$\P^1\subset \P_k(K)$ is a flag map. 
Indeed, we have $\P^1= x\cdot  \ml(1,y)$, for some
$x,y\in K^\times/k^\times$.
On every line $\ml(1,y)\subset \mathfrak u$, the value of $\nu$ is 1, and 
on every line $\ml(1,x)\not\subset \mathfrak u$, the value of $\nu$ is constant on 
the complement to one point. 

By Proposition~\ref{prop:flag-map}, this implies that
$\nu$ is a flag map, i.e., defines a valuation on 
$K^\times/k^\times$ with values on $\Gamma_{\nu}:=K^\times/\mo^\times$. 
\end{proof}

\

To complete the proof of Theorem~\ref{thm:main2} we need to 
treat homomorphisms $\psi_1$ such that for most one-dimensional $E\subset K$, 
the image $\psi_1(E^\times/k^\times)$ is cyclic, or even trivial. We have two cases:
\begin{itemize}
\item[(A)] there exists a line $\ml=(1,x)$ such that $\psi_1$ is not a flag map on this line. 
\item[(B)] $\psi_1$ is a flag map on {\em every line} $\ml\subset\P_k(K)$. 
\end{itemize}

In Case (A), let $\mathfrak u'$ be the union of lines $\ml(1,x)$ such that $\psi_1$ is not 
a flag map on $\ml(1,x)$, i.e., is  not constant on 
the line minus a point. 
By the arguments in Lemma~\ref{lemm:prep}, Lemma~\ref{lemm:moo}, 
and Proposition~\ref{prop:general-case}, 
there exists a unique 1-dimensional normally closed subfield
$F\subset L$ such that $\psi_1(\mathfrak u')\subseteq F^\times/l^\times$.
Define 
$$\mo^\times:=\psi_1^{-1}(F^\times/l^\times)\subset K^\times/k^\times.
$$
By assumption on $\psi_1$, $\mo^\times$ is a proper
multiplicative subset of $K^\times/k^\times$.
The induced homomorphism
$$
\nu: K^\times/k^\times\ra  (K^\times/k^\times)/ \mo^\times =:\Gamma_{\nu}
$$
is a valuation map, since it is a flag
map on every line $\ml\subset \P_k(K)$. Indeed, it is constant on all $\ml(1,x)\subseteq \mathfrak u$
and a valuation map on all $\ml(1,y)\not\subset\mo^\times$.
Hence the same holds for any projective line in $\P_k(K)$ which
implies the result.

In Case (B), we conclude that $\psi_1$ is a flag map on 
$\P_k(K)$ (see, e.g., Lemma 4.16 of \cite{bt0}), i.e., 
there exists a valuation $\nu\in \Val_K$ with value group 
$\Gamma_{\nu}=\psi_1(K^\times/k^\times)$
such that the valuation homomorphism $\nu=\psi_1$.

\begin{rema}
The main steps of the proof  (Lemmas \ref{lemm:contain}, \ref{lemm:prep}, and \ref{lemm:moo}) 
are valid for more general fields: the splitting of $\P^2$ into subsets satisfying
conditions (1) and (2) of Lemma~\ref{lemm:contain} is related 
to a valuation, independently of the ground field (see \cite{BT}).
Here we restricted to $k=\bar{\F}_p$
since in this case the proof avoids some 
technical details which appear in the theory of general valuations.
Furthermore, the condition that $K,L$ are function fields
is also not essential. The only essential property is
the absence of an infinite tower of roots for the elements
of $K,L$ which are not contained in the ground field.
\end{rema}

\section{Galois cohomology}
\label{sect:galois}

By duality, the main result of Section~\ref{sect:version}
confirms the general concept that birational properties of 
algebraic varieties are functorially encoded in the structure of the Galois group $G_K^c$.
On the other hand, it follows from the proof of the Bloch--Kato conjecture that 
$G_K^c$ determines the full cohomology of $G_K$.
Here and in Section~\ref{sect:galois} we discuss
group-theoretic properties of $G_K$ and its Sylow subgroups which we believe are ultimately
responsible for the validity of the Bloch--Kato conjecture.

\

Let $G$ be a profinite group, acting continuously
on a topological $G$-module $M$, and let $\rH^i(G,M)$ be the 
(continuous) $i$-cohomo\-logy group.
These groups are contravariant with respect to $G$ and covariant
with respect to $M$; in most of our applications $M$ either $\Z/\ell$ or $\Q/\Z$,
with trivial $G$-action. We recall some basic properties:
\begin{itemize}
\item $\rH^0(G,M)=M^G$, the submodule of $G$-invariants;
\item $\rH^1(G,M)=\Hom(G,M)$, provided $M$ has trivial $G$-action;
\item $\rH^2(G,M)$ classifies extensions
$$
1\ra M\ra \tilde{G}\ra G\ra 1, 
$$
up to homotopy. 
\item if $G$ is abelian and $M$ finite with trivial $G$-action then 
$$
\rH^n(G,M)=\wedge^n(\rH^1(G,M)), \quad \text{ for all } \quad n\ge 1,
$$
\item if $M=\Z/\ell^m$ then 
$$
\rH^n(G,M)\hookrightarrow \rH^n(\Syl_{\ell},M),  \quad \text{ for all } \quad n\ge 0,
$$
where $\Syl_{\ell}$ is the $\ell$-Sylow subgroup of $G$. 
\end{itemize}
See \cite{milgram} for further background on group cohomology and 
\cite{serre}, \cite{neukirch-schmidt} for background on Galois cohomology.
Let 
$$
G^{(n)}:=[G^{(n-1)},G^{(n-1)}]
$$ the $n$-th term of its {\em derived} series, 
$G^{(1)}=[G,G]$.  
We will write 
$$
G^a=G/[G,G], \quad \text{ and } \quad G^c = G/[[G,G],G]
$$
for the abelianization, respectively, the second {\em lower central series} quotient of $G$. 
Consider the diagram, connecting the first terms in the derived series of $G$
with those in the lower central series:

\

\centerline{
\xymatrix{ 
1 \ar[r] &  G^{(1)}\ar@{>>}[d]\ar[r] & G \ar@{>>}[d]\ar[r] & G^a\ar[r] \ar@{=}[d] & 1 \\
1 \ar[r] &  Z      \ar[r]            & G^c          \ar[r] & G^a  \ar[r]  & 1  
}
}

\

\

We have a homomorphism between $\mathrm E_2$-terms of the spectral sequences computing 
$\rH^n(G,\Z/\ell^m)$ and $\rH^n(G^c,\Z/\ell^m)$, respectively. Suppressing the coefficients, we have

\

\centerline{
\xymatrix{
\rH^p(G^a, \rH^q(Z))\ar@{=}[r] \ar[d]&\mathrm E_2^{p,q}(G^c)\ar[d] \ar@{=>}[r] & \mathrm E^{p+q}(G^c)\ar@{=}[r]\ar[d]&\rH^{p+q}(G^c)\ar[d] \\
\rH^p(G^a, \rH^q(G^{(1)}))\ar@{=}[r] &\mathrm E_2^{p,q}(G)   \ar@{=>}[r] & \mathrm E^{p+q}(G)   \ar@{=}[r] & \rH^{p+q}(G)
}
}

\

We have $\rH^0(G, \Z/\ell^m)=\Z/\ell^m$, for all $m\in \N$, and 
$$
\rH^0(G^a, \rH^1(Z))=\rH^1(Z).
$$  
The diagram of corresponding five term exact sequences takes the form:

\

\centerline{
\xymatrix{ 
\rH^1(G^a) = \rH^1(G)\ar[r] & \rH^1(Z)\ar[d]_{\simeq} \ar[r]^{\hskip 0,2cm d_2}\ar[d]&\rH^2(G^a)\ar[r]\ar@{=}[d] & \rH^2(G) \ar@{=}[d] \\ 
\rH^1(G^a) = \rH^1(G)\ar[r] & \rH^0(G^a, \rH^1(G^{(1)}))\ar[r]^{\hskip 0,5cm d_2'}&\rH^2(G^a)\ar[r]& \rH^2(G) 
} 
}

\

\no
where the left arrows map $\rH^1(G)$ to zero.

Let $K$ be {\em any} field containing $\ell^m$-th roots of 1, for all $m\in \N$, and let $G_K$ be its absolute Galois group. 
We apply the cohomological considerations above to $G_K$. 
By {\em Kummer theory}, 
\begin{equation}
\label{eqn:kummer1}
\rH^1(G_K,\Z/\ell^m)=\rH^1(G_K^a, \Z/\ell^m) = \rK_1(K)/\ell^m 
\end{equation}
and we obtain a diagram

\

\centerline{
\xymatrix{
\rH^1(G^a_K,\Z/\ell^m)\otimes\rH^1(G^a_K,\Z/\ell^m)\ar@{>>}[r]^{\!\!\!\!\!\!\!\!\mathfrak s_K} \ar@{=}[d] &\wedge^2(\rH^1(G^a_K,\Z/\ell^m))= \rH^2(G^a_K, \Z/\ell^m) \\
\rK_1(K)/\ell^m\otimes\rK_1(K)/\ell^m \ar@{>>}[r]^{\hskip 0.7cm\sigma_K}                            &\rK_2(K)/ \ell^m                         
}
}

\

\no
where $\mathfrak s_K$ is the symmetrization homomorphism and $\sigma_K$ is the symbol map, and 
$\Ker(\sigma_K)$ is generated by symbols of the form $f\otimes (1-f)$. 
The Steinberg relations imply that
$$
\Ker(\mathfrak s) \subseteq \Ker(\sigma_K)
$$
(see, e.g., \cite[Section 11]{milnor}). We obtain a diagram

\

\centerline{
\xymatrix{
\rH^1(Z_K,\Z/\ell^m)\,\,\, \ar@{^{(}->}[r]^{d_2} & \rH^2(G^a_K,\Z/\ell^m)   \ar@{=}[d]  \ar[r]^{\pi_a^*} & \rH^2(G_K,\Z/\ell^m) \\ 
         I_K(2)/\ell^m  \,\,\,\ar@{^{(}->}[r]       & \wedge^2(\rH^1(G^a_K, \Z/\ell^m)) \ar@{>>}[r]         &   \rK_2(K)/\ell^m   \ar[u]_{h_K}
}
}

\

\no
where $h_K$ is the {\em Galois symbol} (cf. \cite[Theorem 6.4.2]{neukirch-schmidt}) and $I_K(2)$ is defined by the exact sequence
$$
1\ra I_K(2)\ra \wedge^2(K^\times)\ra \rK_2(K)\ra 1.
$$



A theorem of Merkurjev--Suslin \cite{ms} states that $h_K$ is an isomorphism
\begin{equation}
\label{eqn:ms}
\rH^2(G_K, \Z/\ell^m) = \rK_2(K)/\ell^m. 
\end{equation}
This is equivalent to:
\begin{itemize}
\item $\pi_a^* : \rH^2(G^a_K,\Z/\ell^m)\ra \rH^2(G_K)$ is surjective and
\item $\rH^1(Z_K,\Z/\ell^m) = I_K(2)/\ell^m$.
\end{itemize}

The Bloch--Kato conjecture, proved by Voevodsky, Rost, and Weibel, 
generalizes \eqref{eqn:kummer1} and \eqref{eqn:ms} to all $n$. 
This theorem is of enormous general interest, with far-reaching applications
to algebraic and arithmetic geometry. 
It states that for any field $K$ and any prime $\ell$, one has an isomorphism between 
Galois cohomology and the mod $\ell$ Milnor $\rK$-theory:
\begin{equation}
\label{eqn:kk}
\rH^n(G_K, \mu_\ell^{\otimes n}) = \rK_n^M(K)/\ell.
\end{equation}
It substantially advanced our understanding of relations between 
fields and their Galois groups, in particular, their Galois cohomology. 
Below we will focus on Galois-theoretic consequences of \eqref{eqn:kk}. 

We have canonical central extensions

\

\centerline{
\xymatrix{
        &                               & G_K\ar[d]      \ar[dr]  &                & \\
1\ar[r] &          Z_K\ar[r]\ar@{>>}[d] & G^c_K\ar[r] \ar@{>>}[d] &  G^a_K\ar[r] \ar@{>>}[d] &  1 \\
1\ar[r] & \mathcal Z_K\ar[r]            & \G^c_K\ar[r]^{\pi_a^c}  &  \G^a_K\ar[r]            &  1 
}
}

\

and the diagram

\[
\centerline{
\xymatrix@!{
      & G_K\ar[dl]_{\,\,\,\pi_c} \ar[dr]^{\,\,\,\pi_a} & \\
 \G^c_K \ar[rr]_{\pi^c_a}  &   &           \G^a_K. \\
}
}
\]

\

\noindent
The following theorem relates the Bloch--Kato 
conjecture to statements in Galois-cohomology 
(see also \cite{efrat-1}, \cite{chebolu},
\cite{pos}, and \cite{efrat-minac-11}).

\begin{thm} \cite{B-3}, \cite[Theorem 11]{bt2}
\label{thm:bk}
Let $k=\bar{\F}_p$, $p\neq \ell$, and $K=k(X)$ be the function field of an algebraic variety 
of dimension $\ge 2$. The Bloch--Kato conjecture for $K$ is equivalent to:
\begin{enumerate}
\item 
The map 
$$
\pi_a^* \colon \rH^*(\G^a_K, \Z/\ell^n)\to \rH^*(G_K,\Z/\ell^n)
$$ 
is surjective and
\item 
$\
\Ker(\pi_a^c\circ \pi_c)^* = \Ker(\pi_a^*)$.
\end{enumerate}
\end{thm}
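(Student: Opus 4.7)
The plan is to reduce the equivalence to cohomological degrees one and two and then to propagate via the multiplicative structure of cohomology. Since $k=\bar{\F}_p$ contains all $\ell$-power roots of unity, Kummer theory gives
$$
\rH^1(G_K,\Z/\ell^n)=\rH^1(\G^a_K,\Z/\ell^n)=K^\times/(K^\times)^{\ell^n},
$$
so $\pi_a^*=\pi_c^*\circ(\pi_a^c)^*$ is an isomorphism in degree one and both conditions are automatic there. The factorization $\pi_a=\pi_a^c\circ \pi_c$ always yields $\Ker((\pi_a^c)^*)\subseteq \Ker(\pi_a^*)$; I read condition (2) as the reverse inclusion, i.e., that the kernel of $\pi_a^*$ is already captured at the intermediate level $\G^c_K$ (equivalently, $\pi_c^*$ is injective on the image of $(\pi_a^c)^*$).

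For the implication $\textrm{BK}\Rightarrow (1)+(2)$, since Milnor $\rK$-theory is generated as a graded ring by $\rK_1$, the Bloch--Kato isomorphism $\rH^n(G_K,\Z/\ell^n)\simeq \rK_n^M(K)/\ell^n$ expresses every class as a cup product of Kummer classes, all of which lie in the image of $\pi_a^*$; this gives (1). For (2) I would compare the five-term exact sequences of $1\to \mathcal Z_K\to \G^c_K\to \G^a_K\to 1$ and of $1\to G^{(1)}_K\to G_K\to G^a_K\to 1$: in degree two the kernels of $(\pi_a^c)^*$ and $\pi_a^*$ are the images of the transgressions
$$
\rH^1(\mathcal Z_K,\Z/\ell^n)^{\G^a_K}\stackrel{d_2}{\lra}\rH^2(\G^a_K,\Z/\ell^n), \qquad \rH^1(G^{(1)}_K,\Z/\ell^n)^{G^a_K}\stackrel{d_2}{\lra}\rH^2(\G^a_K,\Z/\ell^n).
$$
By Merkurjev--Suslin the second image is the Steinberg ideal generated by the classes $f\cup(1-f)$, and these are killed by $(\pi_a^c)^*$ because the Kummer characters of $f$ and $1-f$ lift to commuting elements of $\G^c_K$, which is the defining feature of the commuting-pair structure of Theorem~\ref{thm:pairs}.

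For the converse $(1)+(2)\Rightarrow \textrm{BK}$, combining the two conditions yields
$$
\rH^n(G_K,\Z/\ell^n)\simeq \rH^n(\G^a_K,\Z/\ell^n)/\Ker((\pi_a^c)^*).
$$
Since $\G^a_K$ is a torsion-free abelian pro-$\ell$ group, its mod-$\ell^n$ cohomology is the exterior algebra on $\rH^1(\G^a_K,\Z/\ell^n)=\rK_1(K)/\ell^n$. It remains to identify $\Ker((\pi_a^c)^*)$ in degree $n$ with the mod-$\ell^n$ Steinberg ideal defining $\rK_n^M(K)/\ell^n$; once this is done the quotient becomes $\rK_n^M(K)/\ell^n$ and Bloch--Kato follows. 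In degree two this identification is again the content of the five-term sequence for $\G^c_K\to \G^a_K$, combined with the explicit description of its extension class via commuting pairs.

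The hardest step will be the propagation to higher degrees: one must show that $\Ker((\pi_a^c)^*)$ is generated as a graded ideal in the exterior algebra $\rH^*(\G^a_K,\Z/\ell^n)$ by its degree-two piece. This is a strong degeneracy statement for the Hochschild--Serre spectral sequence of the central extension $\G^c_K\to \G^a_K$, and it is here that the full structure of $K=k(X)$ as a function field of dimension $\ge 2$ over $\bar{\F}_p$, together with the fan $\Sigma_K$ of inertia and decomposition subgroups developed in \cite{BT}, should enter in order to control the higher transgressions and to match them with the corresponding relations in Milnor $\rK$-theory.
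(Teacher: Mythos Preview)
The paper does not give a self-contained proof of this theorem; it states the result with citations to \cite{B-3} and \cite[Theorem~11]{bt2}, and the material preceding the statement in Section~\ref{sect:galois} amounts to the degree-one and degree-two picture: Kummer theory, the five-term exact sequences for $G^c_K\to G^a_K$ and $\G^c_K\to\G^a_K$, and the reformulation of Merkurjev--Suslin as surjectivity of $\pi_a^*$ in degree two together with $\rH^1(Z_K,\Z/\ell^m)=I_K(2)/\ell^m$. Your outline follows precisely this framework and correctly singles out the passage to higher degrees as the substantive step, so in broad strokes you are aligned with what the paper sketches.

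One point needs correction. Your justification that $f\cup(1-f)$ lies in $\Ker((\pi_a^c)^*)$ because ``the Kummer characters of $f$ and $1-f$ lift to commuting elements of $\G^c_K$'' has the duality reversed. Kummer characters $\chi_f,\chi_{1-f}$ are elements of $\rH^1(\G^a_K,\Z/\ell^n)=\Hom(\G^a_K,\Z/\ell^n)$, whereas commuting pairs in the sense of Theorem~\ref{thm:pairs} are pairs of elements \emph{of} $\G^a_K$ that lift to commuting elements of $\G^c_K$. The correct dual statement is that $f\wedge(1-f)\in\wedge^2(K^\times/\ell^n)$ annihilates every commuting pair $\gamma\wedge\gamma'$ under the natural pairing between $\wedge^2(K^\times/\ell^n)$ and $\wedge^2(\G^a_K)$; this does follow from Theorem~\ref{thm:pairs} (a commuting pair sits in some $\rD^a_\nu$ with one member in $\rI^a_\nu$, and a short case analysis on $\nu(f)$ gives the vanishing), so your instinct is sound once the roles are straightened out. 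The paper's own route to the degree-two case, however, is the more direct identification $\rH^1(Z_K,\Z/\ell^m)=I_K(2)/\ell^m$ recorded just before the theorem, rather than an appeal to commuting pairs.

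On the hard step you flag for $(1)+(2)\Rightarrow$ Bloch--Kato: showing that $\Ker((\pi_a^c)^*)$ is generated as a graded ideal by its degree-two part is indeed a quadraticity statement about the Hochschild--Serre spectral sequence of the central extension $1\to\mathcal Z_K\to\G^c_K\to\G^a_K\to 1$. Note, though, that the fan $\Sigma_K$ and the valuation-theoretic machinery of \cite{BT} are tools for \emph{reconstructing} $K$ from $\G^c_K$; they do not by themselves force degeneration of higher differentials. What is actually needed here is control over $\rH^*(\mathcal Z_K)$ and the module structure of the $E_2$-page, which is closer in spirit to the Koszulity considerations of \cite{pos} and to the freeness discussion the paper takes up in Section~\ref{sect:free}. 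You have located the gap correctly, but the proposed mechanism for closing it is not the right one.
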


This implies that the Galois cohomology of the pro-$\ell$- quotient $\G_K$ 
of the absolute Galois group $G_K$ encodes important birational information of $X$. 
For example, in the case above, $\G^c_K$, and hence $K$, modulo purely-inseparable
extensions, can be recovered from the cup-products
$$
\rH^1(\G_K,\Z/\ell^n) \times \rH^1(\G_K,\Z/ \ell^n) \ra 
\rH^2(\G_K,\Z/\ell^n), \quad n\in \N.
$$

\section{Freeness}
\label{sect:free}

Let $K$ be a function field over an arbitrary ground field $k$ and $G_K$ the absolute Galois group of $K$.
The pro-$\ell$-quotient $\G_K$ of $K$ is highly individual: for $k=\bar{\F}_p$ it determines $K$ up to purely-inseparable extensions. 
On the other hand, let $\Syl_{\ell}(G_K)$ be an $\ell$-Sylow subgroup of $G_K$. This group is {\em universal}, in the following sense:

\begin{prop} \cite{B-2}
Assume that $X$ has dimension $n$ and that $X$ contains a smooth $k$-rational point. Then 
$$
\Syl_{\ell}(G_K)=\Syl_{\ell}(G_{k(\P^n)}).
$$
\end{prop}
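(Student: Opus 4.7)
The plan is to exploit the smooth $k$-rational point $x\in X$ to embed both $K$ and $k(\P^n)$ into a common Henselian overfield depending only on $(k,n)$, and then to recognize their $\ell$-Sylow Galois subgroups inside that of this common field.

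First, a choice of regular parameters $(t_1,\dots,t_n)$ at $x$, together with smoothness and $k$-rationality, yields an isomorphism of Henselian local rings $\mathcal O^h_{X,x}\simeq k\{t_1,\dots,t_n\}$, where the right side is the Henselization of $k[t_1,\dots,t_n]$ at its irrelevant ideal. Its fraction field $L$ depends only on $k$ and $n$. Applying the analogous construction at the origin of $\A^n\subset\P^n$ gives a parallel embedding $k(\P^n)\hookrightarrow L$. After fixing a separable closure $\bar L$, restriction of automorphisms produces injections $G_L\hookrightarrow G_K$ and $G_L\hookrightarrow G_{k(\P^n)}$ whose images are the decomposition subgroups of the corresponding rank-$n$ valuations.

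The key step, and the main obstacle, is to show that under each of these injections an $\ell$-Sylow subgroup of $G_L$ is also an $\ell$-Sylow of the ambient group. Concretely, one must prove that every finite Galois $\ell$-extension of $K$ lies, up to conjugation in $G_K$, inside a Galois extension arising from $L$. For $\ell\ne\mathrm{char}(k)$ this reduces, via Kummer theory and Hensel's lemma, to showing that $\ell$-power ramification can be localized at $x$ using the parameters $t_i$. The delicate case $\ell=\mathrm{char}(k)$ calls for Artin--Schreier--Witt extensions and makes essential use of the fact that $x$ is $k$-rational, so that no residue-field obstruction arises when lifting Witt vectors. Once this step is secured, both $\Syl_\ell(G_K)$ and $\Syl_\ell(G_{k(\P^n)})$ are identified with $\Syl_\ell(G_L)$, and the equality follows from the fact that $L$, and hence $\Syl_\ell(G_L)$, depends only on $(k,n)$.
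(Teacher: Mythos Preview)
There is a genuine gap at the step you yourself flag as ``the main obstacle''. You assert that under the inclusion $G_L\hookrightarrow G_K$, with $L=\mathrm{Frac}\,k\{t_1,\dots,t_n\}$, an $\ell$-Sylow subgroup of $G_L$ is already an $\ell$-Sylow of $G_K$. Since $L/K$ is separable algebraic, this is equivalent to the supernatural degree $[L:K]$ being prime to $\ell$, and that is false --- Hensel's lemma, the very tool you invoke in support, is exactly what produces degree-$\ell$ subextensions of $L/K$. For $\ell\neq\mathrm{char}(k)$, any unit $u\in\mathcal O_{X,x}^\times$ whose residue admits an $\ell$-th root in $k$ makes $T^\ell-u$ have a simple root modulo $\mathfrak m_x$, so $\sqrt[\ell]{u}\in\mathcal O_{X,x}^h\subset L$; for $\ell=\mathrm{char}(k)$, every Artin--Schreier polynomial $T^\ell-T-a$ has simple roots, with the same effect. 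Iterating, $\ell^\infty$ divides $[L:K]$, and $\Syl_\ell(G_L)$ sits inside $\Syl_\ell(G_K)$ with infinite $\ell$-power index rather than exhausting it. Your ``concrete'' reformulation --- that every finite Galois $\ell$-extension of $K$ lies, up to conjugacy, inside one arising from $L$ --- is neither the correct criterion for $G_L$ to contain a full $\ell$-Sylow of $G_K$, nor is it true: the extension $K(\sqrt[\ell]{t_1})/K$ is ramified at $x$, and no $G_K$-conjugate of $L$ contains it. (A related slip: for $n\ge 2$ the ring $\mathcal O_{X,x}^h$ is regular local but not a valuation ring, so $G_L\subset G_K$ is not literally the decomposition group of a rank-$n$ valuation.)

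Note that the paper does not supply its own proof; the proposition is quoted from \cite{B-2}. The identification $\mathcal O_{X,x}^h\simeq k\{t_1,\dots,t_n\}$ is indeed the starting point there, but it is used to set up an \'etale correspondence between a neighbourhood of $x\in X$ and one of $0\in\A^n$, and then to run an induction on $n$ via a divisorial valuation centred on a smooth hypersurface through $x$, invoking the known structure of decomposition and inertia groups for discrete valuations. It is not used to trap both Galois groups inside $G_L$ and compare their $\ell$-Sylow subgroups by inclusion; as the computation above shows, that route cannot succeed.
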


In particular, when $k$ is algebraically closed, 
the $\ell$-Sylow subgroups depend only on the dimension 
of $X$. This universal group will be denoted by $\Syl_{\ell}$.   
The following {\em Freeness} conjecture captures an aspect of this universality. 
It implies the (proved) Bloch--Kato conjecture; but more importantly, 
it provides a structural explanation for its truth. 

\begin{conj}[Bogomolov]
\label{conj:free}
Let $k$ be an algebraically closed field of characteristic $\neq \ell$, 
$X$ an algebraic variety over $k$ of dimension $\ge 2$, 
$K=k(X)$, and write
$$
\Syl_{\ell}^{(1)}:=[\Syl_{\ell},\Syl_{\ell}]
$$
for the commutator of an $\ell$-Sylow subgroup of $G_K$. 
Then 
\begin{equation}
\label{eqn:hi}
\rH^i(\Syl^{(1)}_{\ell}, \Z/\ell^m)=0, \quad\text{ for all }\quad i\ge 2, m\in \N.
\end{equation}
\end{conj}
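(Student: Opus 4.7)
The plan is to reduce the conjecture to a cohomological-dimension statement and then attack it by induction on $\trdeg_k(K)$, drawing on the valuation-theoretic structure developed in Section~\ref{sect:galois-proj}. Since $\Syl_\ell^{(1)}$ is a pro-$\ell$ group, Serre's characterization of $\ell$-cohomological dimension makes the vanishing \eqref{eqn:hi} for $i\ge 2$ equivalent to $\Syl_\ell^{(1)}$ being a free pro-$\ell$ group; a standard Bockstein argument applied to $0\to \Z/\ell^{m-1}\to \Z/\ell^m\to \Z/\ell\to 0$ then promotes the statement for $\Z/\ell$-coefficients to one for all $\Z/\ell^m$, so one reduces immediately to the single equation
$$\rH^2(\Syl_\ell^{(1)},\Z/\ell)=0.$$

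The most tractable reformulation identifies $\Syl_\ell^{(1)}=G_L$ as the absolute Galois group of $L$, the maximal pro-$\ell$ abelian extension of the maximal prime-to-$\ell$ algebraic extension $K^{(\ell)}$ of $K$. Since $L$ contains all $\ell$-power roots of unity, Kummer theory turns the desired vanishing into $\Br(L)[\ell]=0$. I would show that every class $\alpha\in\Br(L)[\ell]$ is (i) unramified at every divisorial valuation of $L$, because all inertia subgroups $\I^a_\nu\subset \G^a_K$ have been killed upon passage to the pro-$\ell$ abelianization, and (ii) residually trivial, by induction on $\trdeg_k(K)$: by Theorem~\ref{thm:pairs} the quotient $\D^a_\nu/\I^a_\nu=\G^a_{\KK_\nu}$ carries a parallel valuation-theoretic structure, and the residue field $\KK_\nu$ has smaller transcendence degree, so the inductive hypothesis applies to its own $\Syl_\ell^{(1)}$. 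The base case $\trdeg_k(K)=2$ should be accessible via the arithmetic of surfaces over $\bar{\F}_p$ combined with purity of the Brauer group.

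The main obstacle is expected to be step (ii) and the gluing across valuations: an element of $\Br(L)[\ell]$ need not descend to a finite level, so unramifiedness and residue triviality at each $\nu$ must be controlled uniformly in $\nu$. This requires exploiting the full combinatorics of the fan $\Sigma_K$, in particular the fact from Theorem~\ref{thm:pairs} that decomposition subgroups of divisorial valuations arise as centralizers of inertia generators, so that a Brauer class trivial on each $\D^a_\nu$ is forced to be globally trivial. Making this gluing rigorous, while avoiding circular appeals to the Bloch--Kato theorem (which this conjecture is supposed to structurally explain rather than consume), is where I expect the genuinely hard work to lie.
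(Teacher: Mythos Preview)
The statement is a \emph{conjecture} in the paper and is not proved there. After stating it, the paper records only the reduction you also make --- that for a pro-$\ell$ group the vanishing of $\rH^2$ forces the vanishing of all $\rH^i$ for $i\ge 2$ (Remark~\ref{rema:koch}) --- and then explores what the conjecture would imply for the spectral sequence computing $\rH^*(G_K)$. There is therefore no proof in the paper to compare your proposal against.

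Your outline is a plausible research plan rather than a proof, and you are candid that the decisive step is missing. One point does need correction: your claim (i) that divisorial inertia is ``killed upon passage to the pro-$\ell$ abelianization'' is mis-stated. The field $L$ is the fixed field of $[\Syl_\ell,\Syl_\ell]$, so the abelian inertia images $\I^a_\nu$ live in the \emph{quotient} $\Syl_\ell/\Syl_\ell^{(1)}$, while the inertia relevant to ramification of a class in $\Br(L)[\ell]$ is carried by the subgroups $I_w\cap \Syl_\ell^{(1)}\subset G_L$ for valuations $w$ of $L$, and these have no a priori reason to vanish. Beyond this, the local-to-global passage you flag --- assembling residue-triviality at each $\nu$ into global triviality in $\Br(L)[\ell]$ --- is precisely the open heart of the problem, and neither your sketch nor the paper supplies it.
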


\begin{rema}
\label{rema:koch}
For profinite $\ell$-groups, the vanishing in Equation~\eqref{eqn:hi} for $i=2$ 
implies the vanishing for {\em all} 
$i\ge 2$ (see \cite{koch}). 
\end{rema}

We now return to the cohomological considerations in Section~\ref{sect:galois}.
The standard spectral sequence associated with 
$$
1\ra G^{(1)} \ra G\ra G^a\ra 1,
$$
gives
$$
\rH^p(G^a, \rH^q(G^{(1)}, \Z/\ell^m))\Rightarrow \rH^n(G,\Z/\ell^m).
$$
We apply this to $G=\Syl_{\ell}$; suppressing the coefficients we obtain:

\

\centerline{
\xymatrix{
               0                          &         0                  &     0                      &\cdots  \\
 \rH^0(G^a, \rH^2(G^{(1)}))               &         0                  &     0                      &\cdots  \\
 \rH^0(G^a, \rH^1(G^{(1)}))\ar[rrd]^{d_2} & \rH^1(G^a, \rH^1(G^{(1)})) &  \rH^2(G^a, \rH^1(G^{(1)}))&\cdots  \\
 \rH^0(G^a, \rH^0(G^{(1)}))               & \rH^1(G^a, \rH^0(G^{(1)})) & \rH^2(G^a, \rH^0(G^{(1)})) &\cdots  \\
}
}

\

\

\noindent
Conjecture~\ref{conj:free} would imply that, for $G=\Syl_{\ell}$, and also for $G=G_K$,  
$\rH^2(G^{(1)})$ and consequently {\em all} 
entries above the second line vanish.
In this case, we have a long exact sequence (see, e.g., \cite[Lemma 2.1.3]{neukirch-schmidt})
$$
0\ra \rH^1(G^a)\ra \rH^1(G)\ra \rH^0(G^a, \rH^1(G^{(1)}))\ra \rH^2(G^a)\ra \cdots 
$$
$$
\cdots\ra
\rH^n(G^a,\rH^1(G^{(1)})) \stackrel{d_2}{\lra} \rH^{n+2}(G^a)\ra \rH^{n+2}(G) \ra \cdots   
$$
In Section~\ref{sect:galois} we saw that for $n=0$ the homomorphism $d_2'$ in the sequence
$$
\rH^n(G^a_K,\rH^1(G^{(1)}_K))\stackrel{d_2'}{\lra} \rH^{n+2}(G^a_K)\ra \rH^{n+2}(G_K) = \rK_{n+2}(K)/\ell^m
$$ 
can be interpreted as the embedding of the {\em skew-symmetric} relations:
$$
\rH^0(G^{a}, \rH^1(Z))= I_K(2)/\ell^m \stackrel{d_2'}{\longrightarrow} \wedge^2(\rH^1(G^a_K))=\wedge^2(K^\times/\ell^m) \ra \rK_2(K)/\ell^m.
$$ 
This relied on Kummer theory and the Merkurjev--Suslin theorem. 
We proceed to interpret the differential $d_2$ for higher $n$.

We work with $G=\Syl_{\ell}$, an $\ell$-Sylow subgroup of the absolute Galois group of a 
function field $K$ over an algebraically closed field. 
We have an exact sequence of continuous $G^a$-modules
$$
1\ra [G,G^{(1)}]\ra G^{(1)}/G^{(2)} \ra Z\ra 1.
$$
Note that $G^a$ acts trivially on $Z$, and $\rH^1(Z)$, and via $x\mapsto gxg^{-1} - x $ on  $G^{(1)}/G^{(2)}$.
Dually we have a sequence of $G^a$-modules:
$$
\Hom([G,G^{(1)}],\Z/\ell^m) \leftarrow \Hom(G^{(1)}/G^{(2)},\Z/\ell^m) \leftarrow \Hom(Z,\Z/\ell^m)\leftarrow 1.
$$
Define 
$$
M:=\Hom(G^{(1)}/G^{(2)},\Z/\ell^m),
$$
then 
$$
M^{G^a}=\Hom(Z,\Z/\ell^m)=\rH^1(Z) = I_K(2)/\ell^m.
$$

We have a homomorphism 
\begin{equation}
\label{eqn:new}
\rH^n(M^{G^a})\ra \rH^n(M)
\end{equation}
via the natural embedding.
Since $\rH^0(M^{G^a})$ embeds into $\rH^2(G^a)$ via $d_2$ we obtain a natural
homomorphism 
$$
t_n :  \rH^n(M^{G^a})\ra \rH^{n+2}(G^a)
$$ 
and the differential $d_2$ on the
image of  $\rH^n(M^{G^a})\to \rH^n(M)$ coincides with $t_n$.
Thus the fact that the
kernel of $\rH^n(G^a)\ra  \rH^n(G)$ is generated by trivial
symbols will follow from the surjectivity of the homomorphism 
in \eqref{eqn:new}.
This, in turn, would follow if the projection $M\to M/M^{G^a}$ defined a trivial
map on cohomology. 
Thus we can formulate the following conjecture which complements
the Freeness conjecture \ref{conj:free}:

\begin{conj} 
The projection $M\to M/M^{G^a}$ can be factored as
$$
M\hookrightarrow D\twoheadrightarrow M/M^{G^a},
$$
where $D$ is a cohomologically trivial $G^a$-module.
\end{conj}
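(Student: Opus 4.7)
The plan is to derive the conjecture from the Freeness Conjecture~\ref{conj:free}, exhibiting it as the module-theoretic shadow of the freeness of $G^{(1)}$. Assuming $G^{(1)} = \Syl_\ell^{(1)}$ is a free pro-$\ell$ group, the abelianization $A := G^{(1)}/G^{(2)}$ is a free pro-$\ell$ abelian group carrying a $\Lambda$-module structure via conjugation, where $\Lambda := \Z_\ell[[G^a]]$. The coinvariants $A_{G^a}$ are canonically identified with $Z$, so that $M^{G^a} = \Hom(Z, \Z/\ell^m)$ picks out the invariant part of $M$ as expected, and the question reduces to supplying enough cohomologically trivial ambient modules containing $M$.

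The key technical input is the pro-$\ell$ Magnus--Fox calculus: since $G^{(1)}$ is free pro-$\ell$, one has an exact sequence of $\Lambda$-modules
\[
0 \longrightarrow A \longrightarrow \widehat{\bigoplus}_X \Lambda \longrightarrow I_{G^a} \longrightarrow 0,
\]
with $I_{G^a} \subset \Lambda$ the augmentation ideal and $X$ a minimal topological generating set of $G^{(1)}$. Dualizing by $\Hom(-, \Z/\ell^m)$ turns the free middle term into $T := \prod_X \mathrm{CoInd}^{G^a}_1(\Z/\ell^m)$, which is cohomologically trivial. Coupled with the standard resolution $0 \to I_{G^a} \to \Lambda \to \Z_\ell \to 0$, this realizes $M$ as an extension whose failure to be coinduced is concentrated in a single $\Ext^1(I_{G^a}, \Z/\ell^m)$ term, and identifies $M^{G^a}$ as a distinguished submodule of $T$.

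From this one constructs $D$ as the pushout $M \oplus_{M^{G^a}} T$ along $M^{G^a}$, giving the embedding $M \hookrightarrow D$ and a natural surjection $D \twoheadrightarrow M/M^{G^a}$ compatible with the canonical projection from $M$. The main obstacle is to verify that $D$ is itself cohomologically trivial: since dualization converts submodule inclusions into quotient maps, the amalgamation is cohomologically trivial only after one shows that the $\Ext^1(I_{G^a}, \Z/\ell^m)$ obstruction class becomes trivial once pushed to $M/M^{G^a}$. This last vanishing is precisely the Bloch--Kato type surjectivity from Theorem~\ref{thm:bk}, so the two conjectures are tightly interwoven: Freeness supplies the $\Lambda$-resolution of $A$, while the present conjecture asserts its compatibility with the canonical filtration of $M$ by $G^a$-invariants.
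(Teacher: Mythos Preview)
The statement you are attempting to prove is presented in the paper as an \emph{open conjecture}; there is no proof given. The authors only add, immediately after the statement, that they ``hope that a construction of a natural module $D$ can be achieved via algebraic geometry.'' So there is nothing in the paper to compare your argument to, and your proposal should be read as an attempted (conditional) resolution of an open problem rather than a reconstruction of an existing proof.

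That said, your sketch has two concrete gaps.

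First, the exact sequence
\[
0 \longrightarrow A \longrightarrow \widehat{\bigoplus}_X \Lambda \longrightarrow I_{G^a} \longrightarrow 0
\]
is not justified by the Freeness Conjecture. The Magnus--Fox--Crowell sequence of this shape arises when the \emph{ambient} group is free pro-$\ell$ on the set $X$, and then $A$ is the relation module for the quotient map to $G^a$. Here $G=\Syl_\ell$ is not free (it has $\mathrm{cd}_\ell=\trdeg_k(K)\ge 2$), and what Conjecture~\ref{conj:free} asserts is only that the \emph{kernel} $G^{(1)}$ is free. Freeness of $G^{(1)}$ tells you that $A=(G^{(1)})^{ab}$ is $\Z_\ell$-free, but says nothing directly about its $\Lambda$-module structure, and in particular does not place it in a two-term $\Lambda$-free resolution with cokernel $I_{G^a}$. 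Indexing by a generating set of $G^{(1)}$, as you do, is not how the Fox calculus works.

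Second, even granting your resolution, the pushout $D=M\oplus_{M^{G^a}}T$ is not shown to be cohomologically trivial. From $0\to M^{G^a}\to M\oplus T\to D\to 0$ and the assumed triviality of $T$, vanishing of $\rH^i(G^a,D)$ for $i\ge 1$ requires both that $\rH^i(G^a,M^{G^a})\to \rH^i(G^a,M)$ be surjective \emph{and} that $\rH^{i+1}(G^a,M^{G^a})\to \rH^{i+1}(G^a,M)$ be injective. You reduce the first to ``Bloch--Kato type surjectivity from Theorem~\ref{thm:bk}''; but the whole point of the conjecture, as the paper explains just before stating it, is that the existence of such a $D$ would \emph{imply} the surjectivity of $\rH^n(M^{G^a})\to\rH^n(M)$, which in turn feeds into the Bloch--Kato mechanism. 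Invoking that surjectivity to build $D$ is circular. And you do not address the injectivity half at all.

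In short: the paper offers no proof, and your proposal does not supply one either. The Magnus--Fox step needs a genuinely different input than freeness of $G^{(1)}$, and the cohomological triviality of your pushout is equivalent to (a strengthening of) what the conjecture is designed to prove.
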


We hope that a construction of a natural module $D$ 
can be achived via algebraic geometry.

\def\cprime{$'$} \def\cftil#1{\ifmmode\setbox7\hbox{$\accent"5E#1$}\else
  \setbox7\hbox{\accent"5E#1}\penalty 10000\relax\fi\raise 1\ht7
  \hbox{\lower1.15ex\hbox to 1\wd7{\hss\accent"7E\hss}}\penalty 10000
  \hskip-1\wd7\penalty 10000\box7}
\providecommand{\bysame}{\leavevmode ---\ }
\providecommand{\og}{``}
\providecommand{\fg}{''}
\providecommand{\smfandname}{\&}
\providecommand{\smfedsname}{\'eds.}
\providecommand{\smfedname}{\'ed.}
\providecommand{\smfmastersthesisname}{M\'emoire}
\providecommand{\smfphdthesisname}{Th\`ese}


\end{document}